\documentclass[english,11pt]{amsart}
\usepackage[T1]{fontenc}
\usepackage[latin9]{inputenc}
\usepackage{babel}
\usepackage{geometry,graphicx} 
\geometry{verbose,lmargin=2cm,rmargin=2cm}
\usepackage{amsthm}
\usepackage{amsmath}
\usepackage{amssymb}
 \usepackage{amsfonts}
 \usepackage{amscd}
 \usepackage[all]{xy} 
\usepackage[dvipsnames]{xcolor}
\usepackage{dblfloatfix}
\usepackage{pdfsync}
\usepackage{hyperref}
\usepackage{url}
\usepackage{multirow}
\usepackage[sc]{mathpazo}

\usepackage{floatrow}

\makeatletter
\newtheorem{theorem}{Theorem}[section]
\newtheorem*{theorem*}{Theorem}

\newtheorem{prop}[theorem]{Proposition}

{\theoremstyle{definition} \newtheorem{defn}[theorem]{Definition}}
{\theoremstyle{remark} \newtheorem{remark}[theorem]{Remark}
\newtheorem{example}[theorem]{Example}
\newtheorem{notation}[theorem]{Notation}
\newtheorem{algorithm}[theorem]{Algorithm}
}

\newcommand{\cW}{\mathcal W}
\newcommand{\wR}{R}  
\newcommand{\bfW}{{\mathbf{W}}}

\newcommand{\bfE}{{\mathbf{E}}}
\newcommand{\bfe}{\mathbf{e}}

\newcommand{\bfm}{\mathbf{m}}
\newcommand{\bfd}{\mathbf{d}}
\newcommand{\bfw}{\mathbf{w}}

\newcommand{\bfHbar}{{\mathbf{H}}}

\newcommand{\sM}{\mathcal M}

\newcommand{\mfW}{\mathfrak{W}}

\newcommand{\bC}{\mathbb C}

\newcommand{\cH}{\mathcal H}

\newcommand{\cV}{\mathcal V}

\newcommand{\cM}{\mathcal M}

\newcommand{\cR}{\mathcal R}

\newcommand{\cX}{\mathcal X}
\newcommand{\cY}{\mathcal Y}

\newcommand{\cL}{\mathcal L}
\newcommand{\cK}{\mathcal K}
\newcommand{\cC}{\mathcal C}
\newcommand{\cG}{\mathcal G}

\newcommand{\cS}{\mathcal S}
\newcommand{\cZ}{\mathcal Z}

\newcommand{\sF}{\mathcal F}
\newcommand{\bP}{\mathbb{P}}


\newcommand{\xSlice}{[x]}

\newcommand{\ySlice}{[y]}

\newcommand{\xySlice}{[xy]}

\newcommand{\sH}{\mathcal{G}}
\newcommand{\Sym}{\rm Sym}

\newcommand{\xBar}{{\bf [x]}}
\newcommand{\yBar}{{\bf [y]}}
\newcommand{\zBar}{{\bf [z]}}

\newcommand{\ppp}{\mathbb{P}^{n_1}\times\cdots\times\mathbb{P}^{n_k}}

\newcommand{\sq}{\square}

\DeclareMathOperator{\Rand}{Rand}

\DeclareMathOperator{\trace}{Trace}

\DeclareMathOperator{\system}{System}

\DeclareMathOperator{\points}{Points}

\DeclareMathOperator{\slice}{Slice}




\begin{document}

\title{Multiprojective witness sets and a trace test}


\author[J.~D.~Hauenstein]{Jonathan D.~Hauenstein}
\address{Department of Applied \& Computational Mathematics \& Statistics\\
         University of Notre Dame\\
         Notre Dame, IN  46556\\         
         USA}
\email{hauenstein@nd.edu}
\urladdr{\url{http://www.nd.edu/\~jhauenst}}

\author[J.~I.~Rodriguez]{Jose Israel Rodriguez}
\address{Department of Statistics\\
         University of Chicago\\
         Chicago, IL 60637\\         
         USA}
\email{JoIsRo@uchicago.edu}
\urladdr{\url{http://home.uchicago.edu/\~joisro}}



\begin{abstract}
\noindent
In the field of numerical algebraic geometry, 
positive-dimensional solution sets of systems of polynomial equations 
are described by witness sets.
In this paper, we define multiprojective witness sets
which encode the multidegree information of an 
irreducible multiprojective variety. 
Our main results generalize the regeneration solving procedure,
a trace test, and numerical irreducible decomposition to
the multiprojective case.  Examples are included to demonstrate
this new approach.  

\noindent\textbf{AMS Subject Classification 2010}:
65H10,
65H20,
14Q15.
\end{abstract}
\maketitle

\section{Introduction}

Numerical algebraic geometry contains algorithms for computing and studying
solution sets, called {\em varieties}, of systems of polynomial equations.
Depending on the structure of the equations, 
we can view the variety as either affine or projective.
Witness sets are the numerical algebraic geometric description 
of affine and projective varieties.
If a variety is irreducible, its dimension and degree 
can be recovered directly from a witness set.
When the variety is reducible, one can compute 
witness sets for each of the irreducible components 
thereby producing a numerical irreducible decomposition of the variety.

We will consider multiprojective varieties, which are defined by a 
polynomial system consisting of multihomogeneous polynomials.
Multiprojective varieties naturally arise in many applications
including kinematics \cite{AlgebraicKinematics},
likelihood geometry \cite{HRS12,HS14}, 
and identifiability in tensor decomposition \cite{Tensors}.
In fact, multihomogeneous homotopies \cite{MultiHomHomotopy} 
in numerical algebraic geometry
developed from observing bihomogeneous structure
of the inverse kinematics problem for 6R robots \cite{AlgebraicKinematics}.

The paper is structured as follows. 
In Section~\ref{sectionSWS}, we define multiprojective witness sets and their collections. 
Section~\ref{sectionHomotopy} summarizes homotopy continuation.
Section~\ref{sec:Membership} contains our first main contribution: a membership test using multiprojective witness sets. 
Section~\ref{sec:multiregenerate} contains our second 
main contribution: a generalization of the regeneration algorithm to compute multiprojective witness sets
with examples presented in 
Section~\ref{sec:moreExamples}.
Section~\ref{sectionTrace} contains our third
main contribution: a trace test for multiprojective varieties with examples presented in Section~\ref{appTrace}.
In particular, this trace test provided the motivation
to recompute the BKK bound for Alt's problem (see Section~\ref{sec:Alt}) in Theorem~\ref{thm:NinePtBKK}.

\subsection{Multiprojective witness sets}\label{sectionSWS}
A projective variety intersected with a general linear space has an expected number of solutions.
When the dimension of the linear space is complementary to the variety, the expected number of solutions is finite and is
called the {\em degree} of the variety.
A witness point set for a variety is such a 
finite set of points. 

In the multiprojective setting, there are different types of linear spaces that may be taken
which are related to the Chow ring (see \cite[Chap.~8]{MillerSturmfels}).
In particular, a collection of witness sets 
with all possible linear slicing 
can be used to describe a multiprojective variety. 



\begin{defn}\label{defWS}
Let $\cV$ be a $c$-dimensional irreducible multiprojective variety in $\ppp$ and let
$\bfe=\left(e_{1},e_{2},\dots,e_{k}\right)\in\mathbb{N}^k_{\geq 0}$ such that $c=|\bfe|=e_1+\cdots+e_k$.
An \textit{$\bfe^{\rm th}$ witness set} for $\cV$ is a triple: 
$$\mathbf{W}^{\bfe}\left(\cV\right):=
\left\{ V,	L^{\bfe},		\mathbf{w}^{\bfe}\left(\cV\right)\right\}\quad\text{ where }$$
\begin{enumerate}
\item $V$ is a set of polynomials that forms a \textit{witness system} for $\cV$, i.e., $\cV$ is an irreducible component of the solution set $V=0$.  
\item\label{defSlice} $L^{\bfe} = \displaystyle\bigcup_{i=1}^k \left\{\ell_i^{(1)},\dots,\ell_i^{(e_i)}\right\}$ is a set of $|\bfe|$ linear polynomials
with each $\ell_i^{(j)}$ being a general linear polynomial
in the unknowns associated with $\mathbb{P}^{n_{i}}$
of $\ppp$. The solution set $L^\bfe = 0$ defines a codimension $|\bfe|$ linear space denoted by $\cL^\bfe$.
\item $\mathbf{w}^{\bfe}\left(\cV\right) = \cV\cap\cL^\bfe$
is the \textit{witness point set} for $\cV$ with respect
to $L^\bfe$.
\end{enumerate}
The number of points in $\mathbf{w}^{\bfe}\left(\cV\right)$,
namely $|\mathbf{w}^{\bfe}\left(\cV\right)|$,
is the \textit{degree of $\cV$ with respect to $\bfe$}, 
which we will formally denote by
$\deg\mathbf{W}^\bfe\left(\cV\right)=|\bfw^\bfe(\cV)| \omega^\bfe$.
\end{defn}

\begin{defn}\label{defMPC}
A (complete) \textit{witness set collection} for 
a $c$-dimensional irreducible multiprojective 
variety $\cV\subset\ppp$ is a
formal union of witness sets:
\[
\mathfrak{W}\left(\cV\right)=
\bigsqcup_{
\substack{
\bfe\in\mathbb{N}_{\geq0}^{k} \\
c=|\bfe|}}\mathbf{W}^{\bfe}\left(\cV\right)
\]
with {\em degree} defined to be a formal sum:
\[
\deg\mathfrak{W(\cV)}=
\sum_{\substack{\bfe\in\mathbb{N}_{\geq0}^{k} \\ c=|\bfe|}}
|{\bfw^{{\bfe}}\left(\cV\right)}|\omega^{{\bfe}}.
\]
When the context is clear, 
we will write
$\mathfrak{W},\mathbf{W}^{\bfe},\mathbf{w}^{\bfe}$
for $\mathfrak{W}\left(\cV\right),\mathbf{W}^{\bfe}\left(\cV\right),\mathbf{w}^{\bfe}\left(\cV\right)$,
respectively. 
\end{defn}

With this setup, the {\em (multi)degree} of $\cV$ is
$\deg\mathfrak{W}(\cV)$.

\begin{remark}
One may disregard the terms 
where $|{\bfw}^{\bfe}\left(\cV\right)|=0$
in both the formal union of witness sets and 
the formal sum of degrees.
\end{remark}

\begin{example}\label{parabola}
As an illustrative example, let $\cV$ be the irreducible biprojective
curve in $\bP^1\times\bP^1$ with coordinates $([x_0,x_1],[y_0,y_1])\in\bP^1\times\bP^1$ defined by
$V=\{x_1^2y_0-x_0^2y_1\}$.
The degree of $\cV$ and $\mfW$ is 
$1\omega^{(1,0)}+2\omega^{(0,1)}$,
with the geometric meaning of this observed
in Figure~\ref{fig:parabolaSimple}.
In particular,
since $\cV$ is an irreducible hypersurface,
 $\deg\cV$ can be observed directly from $V$ 
based on the degree in each set of variables, 
i.e.,  $\deg\cV=\deg_y(V)\omega^{(1,0)} + \deg_x(V)\omega^{(0,1)}$.

\begin{figure}
\floatbox[{\capbeside\thisfloatsetup{capbesideposition={left,top},capbesidewidth=8cm}}]{figure}[\FBwidth]
{\caption{\small{
In the affine chart $x_0 = y_0 = 1$,
the parabola intersects the vertical line 
($\cL^{(1,0)}$) at one point 
and the horizontal line ($\cL^{(0,1)}$) at two points.
}
}\label{fig:parabolaSimple}}
{\includegraphics[scale=0.3]{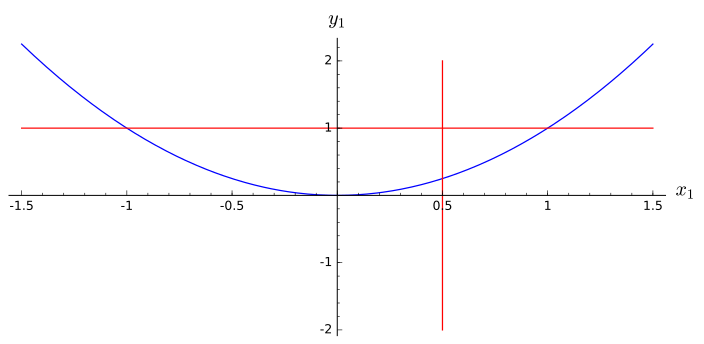}}
\end{figure}
\end{example}

\begin{notation}
Our convention arises from a geometric interpretation based on slicing.
This is the ``reciprocal'' of the algebraic convention,
which is followed by the {\tt multidegree} function in {\tt Macaulay2} \cite{M2}.
For brevity, the degree of a hypersurface $\mathcal{G}$ 
computed by {\tt Macaulay2} is  
$$
\deg{\cG}=\left(|{\bf w}^{\left(n_{1}-1,n_{2},\dots,n_{k}\right)}|,|{\bf w}^{\left(n_{1},n_{2}-1,\dots,n_{k}\right)}|,\dots,|{\bf w}^{\left(n_{1},n_{2},\dots,n_{k}-1\right)}|\right).
$$
Thus, the degree of the hypersurface $\cV$ 
in Ex.~\ref{parabola} may be written as $(2,1)$.  
\end{notation}

\begin{remark}\label{Deflation}
According to Definition \ref{defMPC}, $\bfw^\bfe(\cV)$
is a \textit{set} of points. 
This set contains no information about multiplicity that is 
necessary for describing generically nonreduced components,
i.e., components with multiplicity greater than one.
One can attach the local multiplicity structure in the form
of a Macaulay dual basis, e.g., \cite{DZ05,HSZ13,HMS15}, to the points.
Additionally, one can employ deflation methods, e.g., \cite{Isosingular,LVZ06},
to perform computations on generically~nonreduced~components.
\end{remark}

Just as for classical witness sets, 
e.g., see \cite[Chap.~13]{SW05}, 
we extend the above definitions to 
reducible varieties by taking
formal unions over the irreducible components.

\section{Using Homotopy Continuation for Witness Sets}\label{sectionHomotopy}

A witness set provides information needed 
to perform geometric computations on varieties. 
The tool that permits such computations is
homotopy continuation, which we briefly summarize
in this section.  

\subsection{Homotopies}\label{Sec:Homotopies}

Homotopy continuation is a fundamental tool in numerical algebraic geometry discussed in detail in \cite{BHSW13,SW05}
and implemented in several software packages,
e.g., \cite{Bertini,Hom4PS2,NAG4M2,PHCpack}.
In this manuscript, we employ straight-line homotopies,
e.g., \cite[\S~51]{Munkres}.
One typical use of homotopy continuation below 
is to deform the linear space $\cL^\bfe$ 
as in the witness set~$\bfW^\bfe(\cV)$ 
to another linear space of the same type, say
$\cM^\bfe$, along $\cV$.  
Let $V$ be a witness system for~$\cV$
and suppose that $\cL^\bfe$ and~$\cM^\bfe$ are
defined by linear equations $L^\bfe$ and $M^\bfe$, respectively.
We denote this homotopy by
$$\bfHbar(\cV,\cL^\bfe\to \cM^\bfe):=\begin{cases}
V\\
tL^\bfe+\left(1-t\right)M^\bfe.\end{cases}$$
The set of start points, at $t = 1$, for this homotopy is
the witness point set $\bfw^\bfe(\cV)$.

For membership testing (Section~\ref{sec:Membership}),
one deforms to a general linear space
of type $\bfe$ passing through a given point $\alpha^*$.
The following specifies notation for such a linear 
space.

\begin{notation}\label{notationMembership}
Given a point $\alpha^*\in\ppp$ and $\bfe$,
let $\cL^\bfe_{\alpha^*}$ be a general linear space 
of type $\bfe$ that passes through $\alpha^*$.
\end{notation}

\begin{example}
Let $\cV\subset\bP^1\times\bP^1$ and $V$ as in Ex.~\ref{parabola}.  Let $\bfe = (0,1)$
and $\cL^\bfe$ be the general linear
space defined by $L^\bfe = \{y_0 - 5y_1\}$.
Then, $\bfw^\bfe(\cV)$ consists of two points, namely
$$\bfw^\bfe(\cV) = 
\left\{\left(\left[\sqrt{5}:1\right],[5:1]\right),\left(\left[-\sqrt{5}:1\right],[5:1]\right)\right\}.$$
Consider $\alpha^* = \left([1:1],[1:2]\right)$
and let $\cM^\bfe = \cL^\bfe_{\alpha^*}$
which is defined by 
$M^\bfe = L^\bfe_{\alpha^*} = \{2 y_0 - y_1\}$.  The homotopy
$\bfHbar(\cV,\cL^\bfe\to \cM^\bfe)
= \bfHbar(\cV,\cL^\bfe\to \cL^\bfe_{\alpha^*})$ is thus
$$\left[\begin{array}{c}
x_1^2 y_0 - x_0^2 y_1 \\
t(y_0 - 5 y_1) + (1-t)(2y_0 - y_1) 
\end{array}\right] = 0.$$
Starting, at $t = 1$, with 
the two points $\bfw^\bfe(\cV)$,
the set of endpoints for this homotopy is 
$$\left\{\left(\left[1:\sqrt{2}\right],[1:2]\right),\left(\left[1:-\sqrt{2}\right],[1:2]\right)\right\}.$$
\end{example}

For properly constructed homotopies, called {\em complete homotopies} in \cite{Regen}, each solution path 
defined by the homotopy, say $P_i(t)$, 
is smooth on~$(0,1]$ and thus can be tracked using numerical {\em path tracking} methods, e.g., a predictor-corrector based
approach.  
{\em Endgames}, e.g., see \cite[Ch.~10]{SW05} and \cite{PolyhedralEG}, 
are used to accurately compute the endpoints of the path, i.e., compute $P_i(0)$.

Computationally, we perform path tracking in projective and multiprojective spaces 
by restricting to affine charts.  
That is, one imposes corresponding affine conditions on the coordinates thereby 
fixing a representation of the (multi)projective points. 
For example, in $\bP^1\times\bP^1$, we can perform
computations in $\bC^2\times\bC^2$ by taking affine charts
of the form
$$\square x_0 + \square x_1 + \square = 0\hbox{~~~~and~~~~}\square y_0 + \square y_1 + \square = 0.$$
Here, we follow the convention of
\cite{BHSW13} where $\square$ represents a random
or unspecified complex number.  

\subsection{Randomization}\label{sec:Randomization}

In a witness set for an irreducible variety 
$\cV$, the only condition on the 
polynomial system $V$ is that 
it is a witness system (Def.~\ref{defWS}). 
As in Remark~\ref{Deflation},
deflation techniques can be used to produce
a system of polynomial equations $V$ such 
that $\cV$ is a generically reduced irreducible
component of $V$.  That is, the 
dimension of the null space
of the Jacobian
matrix of $V$ evaluated at a general point of $\cV$ 
is equal to the dimension of $\cV$.
In particular, the number of polynomials in $V$
is greater than or equal to the codimension of $\cV$.
When the number of polynomials in $V$ is
equal to the codimension of~$\cV$, 
homotopies $\bfHbar(\cV,\cL^\bfe\to \cM^\bfe)$
as constructed above are well-constrained systems,
i.e.,~{\em square}.  

If the number of polynomials in $V$ is strictly
greater than the codimension of $\cV$, 
the homtopies are over-determined.  
As discussed in \cite[\S~9.2]{BHSW13}, 
we will employ randomization for over-determined
systems to improve numerical stability.
That is, one performs numerical computations
by replacing $V$ with a generic randomization of $V$,
say $\Rand(V)$.  Hence, $\cV$ is a generically
reduced irreducible component of $\Rand(V)$
in which the number of polynomials in $\Rand(V)$ is
equal to the codimension of $\cV$.  

In the multiprojective setting, we will maintain
multihomogeneity by randomizing with respect 
to specified affine charts by fixing hyperplanes
at infinity.  To that end,
in each $\bP^{n_i}$, suppose that $\cH_{i}$ is 
a general hyperplane defined by the linear polynomial $H_{i}$.
We construct the affine charts by having $H_{i} = 1$
and maintain multihomogeneity in the randomization via $H_{i}$. 
This is demonstrated in the~following.

\begin{example}\label{ex:Randomization}
Consider the following on 
$\bP^2\times\bP^2$ with variables $[x_0,x_1,x_2]$
and $[y_0,y_1,y_2]$, respectively:
$$V = \left[\begin{array}{c}
x_1^3 y_1 - x_2^3 y_2 \\
x_1x_2y_0^2 - x_0^2y_1y_2 \\
x_1^2y_0 - x_0x_2y_2 \\
x_2^2y_0 - x_0x_1y_1
\end{array}\right].
$$
Let $\cV$ be the solution set of $V = 0$,
which is irreducible and has codimension $2$.  
Hence, we aim to construct $\Rand(V)$ consisting
of $2$ polynomials by using the following:
$$H_1= x_0 + 2x_1 - 3x_2 \hbox{~~~and~~~}
H_2 = 2 y_0 - 5 y_1 + 3 y_2.$$
For example, we can take $\Rand(V)$ to have the form
$$\Rand(V) = 
\left[\begin{array}{c}
x_1^3 y_1 - x_2^3 y_2 + \square H_1 (x_1^2y_0 - x_0x_2y_2) + \square H_1(x_2^2y_0 - x_0x_1y_1) \\
x_1x_2y_0^2 - x_0^2y_1y_2 + \square H_2 (x_1^2y_0 - x_0x_2y_2) + \square H_2(x_2^2y_0 - x_0x_1y_1) \\
\end{array}\right]$$
so that both polynomials in $\Rand(V)$ are homogeneous
of degree $(3,1)$ and $(2,2)$, respectively.
\end{example}

One downside of utilizing randomization is the destruction
of sparsity structure.  For example, in Ex.~\ref{ex:Randomization}, $V$ consists of binomials
while $\Rand(V)$ does not.  
Another downside of randomization is the increase of degrees.
In $\bC^N$ or $\bP^N$, one can order the polynomials
based on degrees to minimize the degrees of the polynomials in the randomization, i.e., adding random linear combinations of 
smaller degree polynomials to polynomials of higher degree.
However, in the multiprojective case, the degrees
of the polynomials, each of which is
a vector of integers, need not have a well-ordering.

\section{Membership Test}\label{sec:Membership}

One application of witness sets is to use
homotopy continuation  to decide membership in the 
corresponding variety~\cite{SVW2001}
which was extended to images of algebraic sets
using pseudowitness sets in~\cite{HS13}.  
In this section, we describe our first
main contribution which is 
using a multiprojective witness set collection to test membership in the
corresponding multiprojective variety.

Suppose that $\cV\subset\ppp$ is an irreducible multiprojective variety and we want decide if a
given point $\alpha^*$ is a member of $\cV$.  
The following highlights the difficulty
of producing a membership test for multiprojective varieties,
namely the loss of transversality of slices 
passing through~$\alpha^*$.

\begin{example}\label{ex:NotTransverse}
Let $\cV\subset\bP^2\times\bP^2$ be the irreducible surface defined by 
$$V=\left[\begin{array}{c} x_1y_1 - x_2y_2 \\
x_0y_1y_2 - x_1y_0^2 \\ x_0 y_1^2 - x_2 y_0^2\end{array}\right]$$ 
and $\alpha^* =([1:0:0],[1:0:3])$.
Since $\cV$ is the only irreducible component defined by
$V$, we verify that~$\alpha^*\in\cV$ by observing $V(\alpha^*) = 0$.  

The multiprojective witness set collection for $\cV$ 
consists of three witness sets
$$\mfW(\cV)=\bfW^{(2,0)}\sqcup \bfW^{(1,1)}\sqcup \bfW^{(0,2)}
\hbox{~~with~~} 
\deg \mfW(\cV) = 2\omega^{(2,0)} + 2\omega^{(1,1)} + 1\omega^{(0,2)}.
$$
In particular, for $\bfe=(2,0)$,
the witness set $\bfW^{\bfe}$ has two witness points.
For simplicity, let $\cL^\bfe$ be the 
 general codimension 2 linear space defined 
by $L^\bfe =\{-x_0+3x_1-2x_2,x_0+x_1+3x_2\}$
and~$\cL^\bfe_{\alpha^*}$ be
defined by $L^\bfe_{\alpha^*} = \{x_1,x_2\}$.
The homotopy $\bfHbar(\cV,\cL^\bfe\to \cL^\bfe_{\alpha^*})$ defines two paths, both of which end
at $([1:0:0],[1:0:0])$.  Since $\alpha^*$ is 
not an endpoint of this homotopy,
a natural conclusion based on previous membership
tests \cite{SVW2001,HS13} 
is that $\alpha^*\notin\cV$.  
However, this perceived failure is obtained
since these membership tests are based on
the intersection of the variety and the linear space
passing through the test point $\alpha^*$ to be transverse
at $\alpha^*$ if $\alpha^*$ is indeed
contained in the variety.  
Here, $\cV\cap\cL_{\alpha^*}^\bfe$ is actually 
a positive-dimensional set 
that contains $\alpha^*$, 
i.e., the intersection of the dimension $2$ variety $\cV$ and the codimension $2$ linear space $\cL_{\alpha^*}^\bfe$ is not transverse at $\alpha^*$.
\end{example}

The following algorithm takes into account transversality for a membership test.

\begin{algorithm}\label{alg:MembershipMP}[Membership test using multiprojective witness sets] 
Given a multiprojective witness set collection $\mfW(\cV)$ for an irreducible variety $\cV\subset\ppp$,
determine if a given point $\alpha^*$ is contained in $\cV$.
\begin{enumerate}
 \item For each $\bfW^\bfe(\cV)$ in $\mfW(\cV)$ with $|w^\bfe(\cV)| > 0$
\begin{enumerate}
\item Construct a general linear space 
$\cL_{\alpha^*}^\bfe$ of type $\bfe$ 
that passes through $\alpha^*$.
\item With start points $\bfw^\bfe(\cV)$,
compute the endpoints $\bfE^\bfe$ defined
by $\bfHbar(\cV,\cL^\bfe\to \cL^\bfe_{\alpha^*})$.
\item\label{isMember} If $\alpha^* \in \bfE^\bfe$, return ``$\alpha^*$ is a member of $\cV$.''
\item\label{isolatedMember} If every point in $\bfE^\bfe$ is isolated in 
$\cV\cap\cL^\bfe_{\alpha^*}$ and $\alpha^*\not\in \bfE^\bfe$, return ``$\alpha^*$ is not a member of $\cV$.''
\end{enumerate} 
\item\label{notMember} Return ``$\alpha^*$ is not a member of $\cV$.''
\end{enumerate}
\end{algorithm}

Before proving correctness of Algorithm~\ref{alg:MembershipMP}, we first show that, for each $\alpha^*\in\cV$, there exists
an element $\bfW^\bfe(\cV)\in\mfW(\cV)$ 
with $|\bfw^\bfe(\cV)|>0$ such that
a general slice $\cL_{\alpha^*}^\bfe$ 
is transverse to $\cV$ at $\alpha^*$.

\begin{prop}[Existence of transversal slices]\label{Prop:Transverse}
If $\cV\subset\ppp$ is an irreducible variety 
and~$\alpha^*\in\cV$, there exists $\bfW^\bfe(\cV)\in\mfW(\cV)$
such that $|\bfw^\bfe(\cV)|> 0$
in which a general linear space $\cL_{\alpha^*}^\bfe$
of type $\bfe$ 
passing through~$\alpha^*$
is transverse to~$\cV$ at $\alpha^*$,
i.e., $\alpha^*$ is an isolated point in $\cV\cap\cL_{\alpha^*}^\bfe$.
\end{prop}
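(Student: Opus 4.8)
The plan is to construct a suitable $\bfe$ by cutting $\cV$ down to $\alpha^*$ with a sequence of general hyperplanes through $\alpha^*$, and then to deduce $|\bfw^\bfe(\cV)|>0$ from a dimension count on an incidence variety; the delicate part will be controlling the local geometry at a singular $\alpha^*$.

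First I would build a flag of irreducible subvarieties $\cV=\cV_0\supsetneq\cV_1\supsetneq\cdots\supsetneq\cV_c=\{\alpha^*\}$, all through $\alpha^*$, with $\dim\cV_j=c-j$, together with indices $i_1,\dots,i_c\in\{1,\dots,k\}$. Inductively, $\cV_{j-1}$ has positive dimension, so it is not a single point and some coordinate projection $\pi_{i_j}\colon\ppp\to\mathbb{P}^{n_{i_j}}$ is non-constant on it; a general linear form $\ell$ in the $\mathbb{P}^{n_{i_j}}$-variables vanishing at $\alpha^*$ then does not vanish on all of $\cV_{j-1}$, so $\cV_{j-1}\cap\{\ell=0\}$ is pure of dimension $c-j$, and I let $\cV_j$ be one of its irreducible components through $\alpha^*$. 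After $c$ steps $\cV_c$ is zero-dimensional and contains $\alpha^*$, hence equals $\{\alpha^*\}$. Setting $e_i=\#\{j:i_j=i\}$ gives $|\bfe|=c$ (and $e_i\le n_i$ automatically, since $\dim\pi_i$ of the flag strictly drops each time group $i$ is used), and the $c$ forms cut out a linear space $\cL^\bfe_{\alpha^*}$ which is a general linear space of type $\bfe$ through $\alpha^*$.

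The point that needs work is that $\alpha^*$ is isolated in the \emph{whole} intersection $\cV\cap\cL^\bfe_{\alpha^*}$, not merely along the flag: a reducible hyperplane section can spawn a spurious positive-dimensional component through $\alpha^*$ that the flag does not record and that may survive every later slice. If each section $\cV_{j-1}\cap\{\ell=0\}$ is locally irreducible at $\alpha^*$, an easy induction shows $\cV\cap\cL^\bfe_{\alpha^*}$ equals $\{\alpha^*\}$ near $\alpha^*$; when $\alpha^*$ is a smooth point of $\cV$ this is automatic, choosing $i_j$ with $\pi_{i_j}$ of nonzero differential on $T_{\alpha^*}\cV_{j-1}$, which makes the section smooth — hence unibranch — at $\alpha^*$ and keeps $\cV_j$ smooth there. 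This already proves the proposition on the dense open smooth locus of $\cV$, and since the set of slices with $\alpha^*$ isolated in $\cV\cap\cL$ is open (upper semicontinuity of fiber dimension), ``general'' may replace ``the one we built.'' For singular $\alpha^*$ I expect to have to work harder: when some $\pi_i$ has large enough image on $\cV_{j-1}$, Bertini's irreducibility theorem applied to the linear system of group-$i$ hyperplanes through $\alpha^*$ keeps $\cV_j$ irreducible, and in the remaining ``thin'' case (all $\pi_i(\cV_{j-1})$ curves or points) one argues branch-by-branch, cutting each analytic branch of $\cV_{j-1}$ at $\alpha^*$ down by one with a general hyperplane through $\alpha^*$ in a group along which that branch is non-constant. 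Making this branch bookkeeping airtight — in particular, showing no branch escapes every admissible slice — is the main obstacle I foresee.

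Finally, for $|\bfw^\bfe(\cV)|>0$, let $\cI=\{(\cL,p):\cL\text{ of type }\bfe,\ p\in\cV\cap\cL\}$ over the irreducible parameter space $B$ of linear spaces of type $\bfe$, with projections $q\colon\cI\to\cV$ and $\Pi\colon\cI\to B$. The fibers of $q$ are products over the groups of Schubert varieties of codimension-$e_i$ linear subspaces of $\mathbb{P}^{n_i}$ through a fixed point, which are irreducible of constant dimension $\dim B-c$; hence $\cI$ is irreducible of dimension $\dim B$. The first part gives a point $(\cL^\bfe_{\alpha^*},\alpha^*)\in\cI$ at which $\alpha^*$ is isolated in the fiber $\cV\cap\cL^\bfe_{\alpha^*}$ of $\Pi$, so (by the fiber-dimension inequality for $\Pi$ at this point) $\dim B=\dim_{(\cL^\bfe_{\alpha^*},\alpha^*)}\cI\le\dim\overline{\Pi(\cI)}\le\dim B$, forcing $\Pi$ to be dominant. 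Thus a general $\cL^\bfe$ meets $\cV$, i.e. $|\bfw^\bfe(\cV)|=|\cV\cap\cL^\bfe|\ge1$, which finishes the proof.
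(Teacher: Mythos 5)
Your overall strategy matches the paper's: cut $\cV$ down to $\alpha^*$ with group-restricted general hyperplanes through $\alpha^*$ to produce a candidate $\bfe$, argue $\alpha^*$ is isolated in the resulting slice, and then pass to a general $\cL^\bfe$ by a semicontinuity/dominance argument. But the way you manufacture $\bfe$ is genuinely different and, as you yourself sense, weaker. You build a flag $\cV=\cV_0\supsetneq\cdots\supsetneq\cV_c=\{\alpha^*\}$ by choosing, at each step, \emph{one} irreducible component of the previous section through $\alpha^*$; the multiplicity $e_i$ then counts how often group $i$ is used along that single chain. The paper instead sets $\sM_i=\bP^{n_1}\times\cdots\times\{\alpha_i^*\}\times\cdots\times\bP^{n_k}$ and defines $e_i$ recursively as the \emph{drop in dimension} of the whole sliced variety when the $i$th coordinate is pinned: $e_1=\dim\cV-\dim(\cV\cap\sM_1)$, then $e_i=\dim(\cV\cap\cK_1\cap\cdots\cap\cK_{i-1})-\dim(\cV\cap\cK_1\cap\cdots\cap\cK_{i-1}\cap\sM_i)$ with $\cK_j$ the already-chosen general slice imposing $e_j$ conditions on $\bP^{n_j}$ through $\alpha^*$. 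This distinction matters precisely at the step you flag as ``the main obstacle'': because the paper's $e_i$ is the codimension of the \emph{entire} fiber over $\alpha_i^*$, not of one branch or one chosen component, the number of conditions imposed at each stage is calibrated to cut \emph{every} local piece through $\alpha^*$, and the argument $0\leq e_i\leq n_i$, $|\bfe|=c$, $\alpha^*$ isolated in $\cV\cap\cK_1\cap\cdots\cap\cK_k$ follows from this aggregate bookkeeping. Your flag tracks a single branch of the tower of intersections, so a component spawned by a reducible hyperplane section, carrying a different projection profile, can indeed survive all later slices of your type $\bfe$; you recognize this and sketch a Bertini-plus-branch-by-branch repair but do not complete it. That is a real gap, and it sits exactly where the paper's choice of $\bfe$ was designed to close it.

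Two smaller points. Your bound $e_i\leq n_i$ via the monotone drop of $\dim\pi_i(\cV_j)$ is correct and is a nice observation; the paper gets the same bound from the fact that pinning one factor can cut codimension by at most $n_i$. Your final positivity argument via the incidence variety $\cI=\{(\cL,p):p\in\cV\cap\cL\}$, irreducibility from Schubert-variety fibers of $q:\cI\to\cV$, and dominance of $\Pi:\cI\to B$ forced by the isolated fiber at $(\cL^\bfe_{\alpha^*},\alpha^*)$, is a correct and self-contained alternative to the paper's one-line appeal to upper semicontinuity; it costs more space but makes the dimension count explicit. Overall: same skeleton, different and less robust construction of $\bfe$, an acknowledged unfilled gap on isolatedness, and an independently valid finish.
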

\begin{proof}
For simplicity in our constructive proof, 
we write $\alpha^* = (\alpha_1^*,\dots,\alpha_k^*)\in\ppp$ and set 
$$\sM_i: = \bP^{n_1}\times\cdots\times\bP^{n_{i-1}}\times\{\alpha_i^*\}\times\bP^{n_{i+1}}\times\cdots\times\bP^{n_k}.$$
Define $e_1 := \dim \cV - \dim (\cV\cap \sM_1)$
and let $\cK_{1}\subset\ppp$ be a general linear space
passing through $\alpha^*$ that imposes $e_1$ conditions on $\bP^{n_1}$, i.e., $\cK_1$ is defined by $e_1$ general linear equations in the unknowns of $\bP^{n_1}$ that vanish on $\alpha_1^*$.
%
We construct $e_2,\dots,e_k$ recursively.
For $i = 2,\dots,k$, define
$$e_i : = \dim\left(\cV\cap\cM_1\cap\cdots\cap\cM_{i-1}\right) - \dim\left(\cV\cap\cM_{1}\cap\cdots\cap\cM_{i-1}\cap\sM_i\right),$$
and let $\cK_{i}$ be a general linear space
passing through $\alpha^*$ that imposes $e_i$ conditions~on~$\bP^{n_i}$.  

By construction, the intersection  $\cap_{i=1}^k\cK_i$ 
is a general linear space of type $\bfe$ 
that passes through $\alpha^*$. 
Thus, we can take $\cL_{\alpha^*}^\bfe $ to be $\cap_{i=1}^k\cK_i$.
It remains to show that for the constructed $\bfe$, the linear space $\cap_{i=1}^k\cK_i$  
is transverse to~$\cV$ at $\alpha^*$.
Since $e_i$ is the dimension
of the fiber over $\alpha^*_i$ with respect to
$\cV\cap \cK_{1}\cap\cdots\cap\cK_{i-1}$,
which is nonempty and has dimension at most~$n_i$, 
we know $0\leq e_i\leq n_i$. 
If $c = \dim \cV$, then it follows that $c = |\bfe|$
because $\cV$ is irreducible and $\alpha^*\in\cV$.
Since  $\cap_{i=1}^k\cK_i$ is 
sequentially imposing $e_i$ conditions
on the corresponding fibers which have dimension $e_i$,
we know that~$\alpha^*$ is an isolated point 
in~$\cV\cap \cK_1\cap\cdots\cap\cK_k$. 
Upper semicontinuity, e.g., \cite[Thm~A.4.5]{SW05},
provides~\mbox{$|\bfw^\bfe(\cV)|\geq 1$}. 
\end{proof}

The following illustrates this construction.

\begin{example}\label{ex:LoopMembership}
With the setup from Ex.~\ref{ex:NotTransverse},
we follow the proof of Prop.~\ref{Prop:Transverse}.
First,
$$e_1 = \dim \cV - \dim\left(\cV\cap(\{[1:0:0]\}\times\bP^2)\right) = 2 - 1 = 1.$$  
Hence, only one condition on the first $\bP^2$ is needed
to be imposed, say by $\cK_{1}$ 
as in Prop.~\ref{Prop:Transverse}. Next, 
$$e_2 = \dim(\cV\cap\cK_{1}) - \dim\left(
\cV\cap\cK_{1}\cap(\bP^2\times\{[1:0:3]\})\right)
= 1 - 0 = 1$$ 
showing that we also need to impose one condition on
the second $\bP^2$.   Hence, $\bfe = (1,1)$ will yield 
slices that are transversal to $\cV$ at $\alpha^*$.

To illustrate, we consider the linear spaces 
$\cL^\bfe$ and $\cL^\bfe_{\alpha^*}$ 
defined by 
$$L^\bfe = \{x_0 + x_1 + 3x_2, y_0 - 2\sqrt{-1} y_1 - y_2\}
\hbox{~~~and~~~}
L_{\alpha^*}^\bfe = \{x_1 + 3x_2, y_0 - 2\sqrt{-1}y_1 - y_2/3\},$$ respectively.  
The endpoints of the two solution paths
defined by the homotopy
$\bfHbar(\cV,\cL^\bfe\to \cL^\bfe_{\alpha^*})$ 
are~$\alpha^*$ and 
$([3+4\sqrt{-1}:3:-1],[1-2\sqrt{-1}:-1:3])$
with $\alpha^*$ indeed being
an isolated point of $\cV\cap\cL_{\alpha^*}^\bfe$.
\end{example}

We note that the constructive proof of Prop.~\ref{Prop:Transverse} implicitly used an ordering
of the spaces in the multiprojective space $\ppp$.
If the spaces were ordered differently, one could yield
a different~$\bfe$.  

\begin{example}\label{ex:LoopMembership2}
With the setup from Ex.~\ref{ex:NotTransverse},
we follow the proof of Prop.~\ref{Prop:Transverse}
but take the $\bP^2$'s is reverse order, i.e., 
constructing $\bfe = (e_1,e_2)$ by first computing $e_2$
and then computing $e_1$.  In this case, 
$$e_2 = \dim V - \dim\left(\cV\cap(\bP^2\times\{[1:0:3]\})\right) = 2 - 0 = 2$$
and thus $e_1 = 0$.  Hence, $\bfe = (0,2)$
will also yield transversality at 
$\alpha^*$ and therefore can also be used in a membership test.  
In fact, since $\bfw^\bfe(\cV)$ only consists
of one point in this case, only one path is needed
to be tracked to determine that $\alpha^*\in\cV$.
\end{example}

We use Prop.~\ref{Prop:Transverse} to show
that Algorithm~\ref{alg:MembershipMP} provides
a membership test for multiprojective varieties.

\begin{theorem}[Correctness of Algorithm~\ref{alg:MembershipMP}]\label{Thm:Membership}
Algorithm~\ref{alg:MembershipMP} is a valid 
membership test based on multiprojective witness sets.
\end{theorem}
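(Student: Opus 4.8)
The plan is to establish two things about Algorithm~\ref{alg:MembershipMP}: that it terminates and that its output is always correct, the latter split into the cases $\alpha^*\notin\cV$ and $\alpha^*\in\cV$. Termination is immediate: $\mfW(\cV)$ is a finite collection, so the loop of Step~(1) runs finitely often; for each $\bfe$ the homotopy $\bfHbar(\cV,\cL^\bfe\to\cL^\bfe_{\alpha^*})$ has the finite start set $\bfw^\bfe(\cV)$ and hence finitely many paths with well-defined endpoints $\bfE^\bfe$ (Section~\ref{sectionHomotopy}), and deciding isolatedness of each point of $\bfE^\bfe$ in $\cV\cap\cL^\bfe_{\alpha^*}$ is a terminating local dimension computation.

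For correctness, I would first note that $\bfE^\bfe\subseteq\cV$ for every $\bfe$, since the start points $\bfw^\bfe(\cV)$ lie on $\cV$ and the solution paths remain on $\cV$; hence every ``member'' answer produced at Step~(c) is correct. In particular, if $\alpha^*\notin\cV$ then $\alpha^*\notin\bfE^\bfe$ for all $\bfe$, Step~(c) never triggers, and the algorithm returns ``not a member'' at Step~(d) or Step~(2) --- correct in either case. It therefore remains to show that when $\alpha^*\in\cV$ the algorithm returns ``member'' and never returns ``not a member'' at Step~(d).

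Assume $\alpha^*\in\cV$. By Proposition~\ref{Prop:Transverse} there is an $\bfe^\circ$ with $|\bfw^{\bfe^\circ}(\cV)|>0$ for which a general $\cL^{\bfe^\circ}_{\alpha^*}$ is transverse to $\cV$ at $\alpha^*$, i.e.\ $\alpha^*$ is isolated in $\cV\cap\cL^{\bfe^\circ}_{\alpha^*}$. I would then invoke the standard property that the endpoints of $\bfHbar(\cV,\cL^\bfe\to\cL^\bfe_{\alpha^*})$ started at $\bfw^\bfe(\cV)$ contain every isolated point of $\cV\cap\cL^\bfe_{\alpha^*}$; this gives $\alpha^*\in\bfE^{\bfe^\circ}$, so when the loop reaches $\bfe^\circ$ it returns ``member'' at Step~(c). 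It remains to rule out an earlier incorrect exit at Step~(d): fix any $\bfe$ the loop processes; if $\alpha^*\in\bfE^\bfe$, Step~(c) fires correctly, so assume $\alpha^*\notin\bfE^\bfe$. Since $\alpha^*\in\cV\cap\cL^\bfe_{\alpha^*}$, the contrapositive of the property just quoted shows $\alpha^*$ is not isolated in $\cV\cap\cL^\bfe_{\alpha^*}$, so this intersection has a positive-dimensional component. The crux is to deduce that $\bfE^\bfe$ then contains a point lying on a positive-dimensional component of $\cV\cap\cL^\bfe_{\alpha^*}$; given this, the hypothesis of Step~(d) fails, Step~(d) does not fire, and the loop continues until it processes $\bfe^\circ$ and returns ``member''.

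I expect that last deduction to be the main obstacle. The approach I would take is a conservation-of-number argument: the homotopy tracks exactly $|\bfw^\bfe(\cV)|$ paths, as $t\to 0$ each isolated point $p$ of $\cV\cap\cL^\bfe_{\alpha^*}$ absorbs the local intersection multiplicity $i(p;\cV\cdot\cL^\bfe_{\alpha^*})$ of these paths, and a positive-dimensional component of $\cV\cap\cL^\bfe_{\alpha^*}$ contributes a strictly positive amount to the count $|\bfw^\bfe(\cV)|$, forcing at least one path to limit onto such a component and thereby producing a non-isolated endpoint in $\bfE^\bfe$. Alternatively one can argue via the closure of the path family in $\ppp\times[0,1]$ and upper semicontinuity of fiber dimension, using compactness of $\ppp$ so that no path diverges; here one must be careful that $\cL^\bfe_{\alpha^*}$ is only general subject to containing $\alpha^*$, so it has to be checked that genericity of $\cL^\bfe$ alone makes the local path counts behave as claimed. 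Combining termination, the $\alpha^*\notin\cV$ case, and the analysis of Steps~(c) and (d) for $\alpha^*\in\cV$ then shows Algorithm~\ref{alg:MembershipMP} is a valid membership test.
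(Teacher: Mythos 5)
Your proof follows essentially the same route as the paper's, which likewise argues the correctness of each return statement and invokes Proposition~\ref{Prop:Transverse} to handle the case $\alpha^*\in\cV$. The crux you flag --- that a positive-dimensional component of $\cV\cap\cL^\bfe_{\alpha^*}$ forces $\bfE^\bfe$ to contain a nonisolated point --- is exactly the step the paper resolves by citing coefficient-parameter theory \cite{CoeffParam}, which yields $\bfE^\bfe=\cV\cap\cL^\bfe_{\alpha^*}$ whenever every endpoint is isolated (the contrapositive of your claim, since $\bfE^\bfe$ is finite). The conservation-of-number heuristic you sketch is indeed the mechanism underlying that citation, and your residual worry about $\cL^\bfe_{\alpha^*}$ being only constrained-generic can be dropped: coefficient-parameter continuation requires genericity only of the start parameter $\cL^\bfe$ and imposes no genericity condition on the target slice.
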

\begin{proof}
If $\bfe$ is chosen such that 
$\alpha^*\in \bfE^\bfe$, 
then we know $\alpha^*\in\cV$ because $\bfE^\bfe \subset \cV\cap\cL^\bfe_{\alpha^*}$
which  justifies Item~\ref{isMember} of Algorithm \ref{alg:MembershipMP}. 
If every point in~$\bfE^\bfe$
is isolated in $\cV\cap\cL_{\alpha^*}^\bfe$, then
coefficient-parameter homotopy theory \cite{CoeffParam}
provides that $\bfE^\bfe = \cV\cap\cL_{\alpha^*}^\bfe$.
Hence, in this case, $\alpha^*\in\cV$ if and only if $\alpha^*\in\bfE^\bfe$ which justifies Item \ref{isolatedMember}. 
By Prop.~\ref{Prop:Transverse}, there exists $\bfe$ such that 
$|\bfw^\bfe(\cV)| \geq 1$ and we have the property that  
$\alpha^*\in\cV$ if and only if $\alpha^*\in\bfE^\bfe$
which justifies Item~\ref{notMember}. 



\end{proof}

Since Algorithm~\ref{alg:MembershipMP} requires
the input of multihomogeneous witness set 
collection $\mfW(\cV)$, the rest 
of the paper is devoted to computing such collections.

\section{Multiregeneration}\label{sec:multiregenerate}
In this section, we will compute a witness set collection of a variety $\cV$. 
Thus, the aim is to compute isolated points of
$\cV\cap\cL^\bfe$ for all possible 
$\bfe$. 
When  $\cV$ is equidimensional, by all possible $\bfe$  we mean $\bfe\in\mathbb{N}_{\geq 0}^k$ such that $|\bfe|=\dim\cV$; otherwise, we mean  $\mathbf{0}\leq\bfe\leq(n_1,\dots,n_k)$. 
We take an equation-by-equation approach called regeneration \cite{Regen,HSW11}. 
The key idea is the following: given a witness set collection for a variety $\cY$ and a hypersurface $\cG$, compute a witness set collection for $\cY\cap\cG$. Iterating this process we will compute $\mfW(\cV)$ with $\cV$ defined as the intersection of hypersurfaces. 
From $\mfW(\cV$), Section~\ref{sec:Decomposition}
describes computing witness sets
for the irreducible components of $\cV$.


By utilizing an approach based on regeneration,
one is actually constructing a sequence of witness 
point sets for subproblems as part of the computation.
When the subproblems have physical meaning, 
the intermediate steps of regeneration provide useful information.
For example, Alt's problem~\cite{Alt,AltBurmester}
counts the number of four-bar linkages 
whose coupler curve passes through nine general
points in the plane (see Section~\ref{sec:Alt}).  
By using a regeneration-based approach, one actually 
solves the two-point, three-point, $\dots$, 
and eight-point problems in the process of
solving Alt's~nine-point~problem.

The key step of multiregeneration is presented in Section~\ref{Sec:RegenKey}. 
We summarize the 
complete
algorithm in Section~\ref{Sec:RegenAlg}.
Examples are presented in Section~\ref{sec:moreExamples}.

\subsection{Intersection with a hypersurface}\label{Sec:RegenKey}
Given a witness set collection for $\cY$  and a hypersurface $\cG$, we compute a witness set collection for $\cY\cap\cG$. 
For the ease of exposition, we assume,
in this subsection, that no irreducible
component of $\cY$ is contained in $\cG$. 
The general case is provided in Section \ref{Sec:RegenAlg}.

This computation has two steps:
regenerate to union of hyperplanes
and then deform to the hypersurface $\cG$.
To simplify the presentation, we use
the following.

\begin{notation}
We denote $\delta_i$ to be the vector with $1$ in the $i^{th}$ position and $0$ elsewhere.
\end{notation}

\subsubsection{Regenerating to a union of hyperplanes}\label{Sec:RegenHyperplane}

Following the notation above, let $\cY\subset\ppp$
and $\deg \cG = (g_1,\dots,g_k)$.
For $i = 1,\dots,k$ and $j = 1,\dots,g_i$, 
let $s_i^{(j)}$ be a general linear polynomial
in the unknowns associated with~$\bP^{n_i}$.
Suppose that $\cS$ is the union of  $g_1+\cdots+g_k$ hyerplanes defined
by
\begin{equation}\label{defsij}
S = \prod_{i=1}^k \prod_{j=1}^{g_i} s_i^{(j)}.
\end{equation}
Let $\cS_{i}^{(j)}$ be the hyperplane defined
by $s_i^{(j)}$ so that
$$\cS = \bigcup_{i=1}^k \bigcup_{j=1}^{g_i} \cS_{i}^{(j)}.$$
With this setup, both $\cG$ and $\cS$ 
are hypersurfaces of degree $(g_1,\dots,g_k)$
and no irreducible component of $\cY$
is contained in either $\cG$ or $\cS$.
From $\mfW(\cY)$, the following computes
computes the isolated points in $\cY\cap\cS\cap\cL^{\bfd}$ for all possible $\bfd$, 
thereby producing $\mfW(\cY\cap\cS)$.

\begin{algorithm}\label{algCultivate}
[Regenerating to a union]
Given a witness set collection $\mfW(\cY)$ and $\left\{s_i^{(j)}\right\}$ whose product~$S$ defines the union of hyperplanes $\cS$, compute a witness set collection $\mfW(\cY\cap\cS)$.
\begin{enumerate}
\item Initialize $\mfW(\cY\cap\cS)$ to be the empty set. 
\item\label{itembfd} For $\bfd\in\mathbb{N}^k_{\geq 0}$ such that there exist $i\in\{1,2,\dots,k\}$ and $\bfW^\bfe(\cY)\in\mfW(\cY)$ 
with $\bfd+\delta_i=\bfe$:
\begin{enumerate}
\item Initialize $P^\bfd = \emptyset$ and append to $\mfW(\cY\cap\cS)$ the set
$$\bfW^\bfd(\cY\cap\cS):=\left\{Y\cup\{S\}, L^\bfd, P^\bfd\right\}.$$ 
\end{enumerate}
\item For $i = 1,\dots,k$ such that $g_i > 0$:
\begin{enumerate}
\item For each $\bfW^\bfe(\cY)\in\mfW(\cY)$ such
that $\bfe\geq\delta_i$:
\begin{enumerate}
\item\label{defM} Define $\bfd:=\bfe -\delta_i$ and hyperplane 
$\cM$ such that $\cL^\bfd \cap\cM= \cL^\bfe $.
\item For $j = 1,\dots,g_i$, append to $P^\bfd$ 
the endpoints of the homotopy 
$$
\bfHbar\left(\cV\cap\cL^\bfd,\cM\to \cS_{i}^{(j)}\right)=
\begin{cases}
V\\
L^{{\bfd}}\\
tM+\left(1-t\right)s_i^{(j)},
\end{cases}
$$
 with start points $\bfw^\bfe(\cY)$, which are the
isolated points of $\cY\cap\cL^\bfe$.
\end{enumerate}
\end{enumerate}
\item Return $\mfW(\cY\cap\cS)$.
\end{enumerate}
\end{algorithm}

\begin{remark}
In Item \ref{itembfd} of Algorithm \ref{algCultivate}, if $\cY$ is equidimensional then $|\bfd|=\dim\cV-1$ and $|\bfe|=\dim\cV$. 
\end{remark}

\begin{prop}\label{prop:Cultivation}
Algorithm~\ref{algCultivate} 
correctly
returns the multiprojective witness set collection 
$\mfW(\cY\cap\cS)$.
\end{prop}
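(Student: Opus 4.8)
The plan is to verify that Algorithm~\ref{algCultivate} produces, for every admissible $\bfd$, a witness set $\bfW^\bfd(\cY\cap\cS)$ whose point set $P^\bfd$ is exactly the set of isolated points of $(\cY\cap\cS)\cap\cL^\bfd$. The starting observation is the decomposition
\[
(\cY\cap\cS)\cap\cL^\bfd = \bigcup_{i=1}^k\bigcup_{j=1}^{g_i}\left(\cY\cap\cL^\bfd\cap\cS_i^{(j)}\right),
\]
so it suffices to show that, for each $i$ with $g_i>0$ and each $j\in\{1,\dots,g_i\}$, the isolated points of $\cY\cap\cL^\bfd\cap\cS_i^{(j)}$ that are contributed to $P^\bfd$ are precisely those obtained by tracking the homotopy $\bfHbar(\cY\cap\cL^\bfd,\cM\to\cS_i^{(j)})$ from the start points $\bfw^\bfe(\cY)$ with $\bfe=\bfd+\delta_i$. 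The crux is therefore a single-hyperplane statement: given the witness point set $\bfw^\bfe(\cY)=\cY\cap\cL^\bfe$ for $\bfe=\bfd+\delta_i$, and writing $\cL^\bfe=\cL^\bfd\cap\cM$ for a general hyperplane $\cM$ in the $\bP^{n_i}$ variables, the endpoints of the homotopy that slides $\cM$ to $\cS_i^{(j)}$ along the (generically reduced, dimension-one after the $\cL^\bfd$ cut) curve $\cY\cap\cL^\bfd$ recover exactly the isolated points of $\cY\cap\cL^\bfd\cap\cS_i^{(j)}$.

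First I would make precise why $\cY\cap\cL^\bfd$ behaves like a witness system with an extra free parameter: since no irreducible component of $\cY$ lies in $\cS$ (hence in no $\cS_i^{(j)}$), and $\cL^\bfd$, $\cM$, $\cS_i^{(j)}$ are general of the appropriate types, the relevant intersections with $\cY\cap\cL^\bfd$ are finite, and the points of $\cY\cap\cL^\bfe=\cY\cap\cL^\bfd\cap\cM$ are the generic fibers of the projection recording the linear form in the $i$th block; this is exactly the setup of coefficient-parameter homotopy theory~\cite{CoeffParam}. General-position/genericity of $\cM$ gives that every point of $\cY\cap\cL^\bfe$ is an isolated, nonsingular (for the square randomized system, as in Section~\ref{sec:Randomization}) solution, so the homotopy $\bfHbar(\cY\cap\cL^\bfd,\cM\to\cS_i^{(j)})$ is a complete homotopy in the sense of \cite{Regen} whose paths are smooth on $(0,1]$. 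Coefficient-parameter homotopy continuation then yields that the set of endpoints at $t=0$ equals the full intersection $\cY\cap\cL^\bfd\cap\cS_i^{(j)}$ restricted to its isolated (dimension-zero) points: every isolated point of the target is reached, and no path diverges or lands on a positive-dimensional component (the latter using that $\cS_i^{(j)}$ is general so it meets $\cY\cap\cL^\bfd$ only in its finitely many transverse points, away from the base locus $\cL^\bfd\cap\cM\cap\cS_i^{(j)}$ which is empty or of the wrong dimension). Next I would observe that the outer loop over $\bfe\geq\delta_i$ and the construction $\bfd=\bfe-\delta_i$ in step~\ref{defM} enumerates each admissible $\bfd$ together with all ways of obtaining it, so that after the loop completes, $P^\bfd$ is the union over all valid $i$ and all $j\le g_i$ of the isolated points of $\cY\cap\cL^\bfd\cap\cS_i^{(j)}$, which by the displayed decomposition is exactly the isolated points of $(\cY\cap\cS)\cap\cL^\bfd$. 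Finally I would note that the initialization in step~\ref{itembfd} attaches the correct witness system $Y\cup\{S\}$ and a general slice $L^\bfd$ of type $\bfd$, and that $\bfd$ ranges over precisely the tuples that can occur as $|\bfe|-1$ for some $\bfe$ appearing in $\mfW(\cY)$, i.e.\ the tuples relevant for $\cY\cap\cS$ (its dimension drops by one on components not contained in $\cS$); hence the returned formal union is $\mfW(\cY\cap\cS)$.

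The main obstacle I expect is the careful bookkeeping of \emph{which} $\bfd$ are admissible and the claim that no isolated point of the target is missed nor any spurious (embedded or higher-dimensional) point is picked up. Concretely: one must argue that sliding a general $\cM$ to the general hyperplane $\cS_i^{(j)}$ within the same block $\bP^{n_i}$ keeps the intersection with the fixed curve $\cY\cap\cL^\bfd$ zero-dimensional and of constant cardinality along a Zariski-open set of the path (so that counting is preserved and the Regeneration/coefficient-parameter machinery applies verbatim), while allowing that at $t=0$ some start points may collide or flow into a positive-dimensional component of $\cY\cap\cL^\bfd\cap\cS_i^{(j)}$ — these are discarded as non-isolated, and the correctness statement is only about isolated points, consistent with Definition~\ref{defMPC}. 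I would handle this by invoking genericity of the $s_i^{(j)}$ and of the slices $L^\bfd$ (so that the witness point sets are genuine witness point sets for $\cY\cap\cS$, Definition~\ref{defWS}), together with upper semicontinuity of fiber dimension~\cite[Thm~A.4.5]{SW05} to control what happens in the limit, and the standard fact that a general member of a base-point-free linear system meets a fixed variety transversally along its smooth locus. With these inputs the proof reduces to assembling the single-hyperplane fact with the union decomposition, which is essentially the same argument as in the projective regeneration of \cite{Regen,HSW11} carried out block-by-block.
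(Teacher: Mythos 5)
Your proof is correct and takes essentially the same route as the paper's: the same union decomposition $\cY\cap\cS\cap\cL^\bfd = \bigcup_{i,j}\cY\cap\cS_i^{(j)}\cap\cL^\bfd$, the same key observation that $\cM$ and $\cS_i^{(j)}$ are both general hyperplanes in the unknowns of $\bP^{n_i}$, and the same conclusion that the homotopy $\bfHbar(\cY\cap\cL^\bfd,\cM\to\cS_i^{(j)})$ carries the start points $\bfw^\bfe(\cY)=\cY\cap\cM\cap\cL^\bfd$ to the isolated points of $\cY\cap\cS_i^{(j)}\cap\cL^\bfd$. The paper's version is much terser and does not explicitly invoke coefficient-parameter theory, completeness of the homotopy, or upper semicontinuity; those are the standard tools you correctly identify as what makes the terse argument go through.
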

\begin{proof}
By construction,
$$\cY\cap\cS\cap\cL^\bfd = \bigcup_{i=1}^k \bigcup_{j=1}^{g_i}
\cY\cap\cS_i^{(j)}\cap\cL^\bfd.$$
In Item \ref{defM}, the hyperplanes $\cM$ and $\cS_i^{(j)}$ are both 
defined by general linear polynomials
in the unknowns associated with $\bP^{n_i}$. 
Hence, the isolated points in
$\cY\cap\cS_{i}^{(j)}\cap\cL^\bfd$ 
are obtained by the homotopy 
$\bfHbar\left(\cY\cap\cL^\bfd,\cM\to \cS_{i}^{(j)}\right)$ starting at the
isolated points of $\cY\cap\cL^\bfe = \cY\cap\cM\cap\cL^\bfd$.
\end{proof}

\subsubsection{Deforming hypersurfaces}\label{Sec:RegenDeform}

The next step is to deform 
the hypersurface $\cS$ to $\cG$
along $\cY$ to compute $\mfW(\cY\cap\cG)$
from $\mfW(\cY\cap\cS)$.

\begin{algorithm}\label{algDeform}
[Deforming hypersurface]
Given $\mfW(\cY\cap\cS)$ and a hypersurface $\cG$
such that $\cS$ and $\cG$ have the same degree
and no irreducible component of $\cY$ is contained
in $\cG$, compute a witness set
collection~$\mfW(\cY\cap\cG)$.
\begin{enumerate}
\item Initialize $\mfW(\cY\cap\cG)$ to be the empty set.
\item For $\bfW^\bfd(\cY\cap\cS)\in\mfW(\cY\cap\cS)$ do
\begin{enumerate}
\item\label{homotopyQEnd} Set $Q^\bfd$ to the set of endpoints
of the homotopy 
$\bfHbar\left(\cY\cap\cL^\bfd,\cS\to\cG\right)$ starting at the isolated points of $\cY\cap\cS\cap\cL^\bfd$.
\item Remove from $Q^\bfd$ the nonisolated
points of $\cY\cap\cG\cap\cL^\bfd$, e.g., via \cite{LDT}.
\item Append to $\mfW(\cY\cap\cG)$ the multiprojective witness set 
$$
\bfW^\bfd(\cY\cap\cG):=\left\{
V\cup\{G\},
L^\bfd,
Q^\bfd
\right\}.$$ 
\end{enumerate}
\item Return $\mfW(\cY\cap\cG)$.
\end{enumerate}
\end{algorithm}

\begin{prop}\label{prop:Deform}
Algorithm~\ref{algDeform} 
correctly
returns the multiprojective witness set collection 
$\mfW(\cY\cap\cG)$.
\end{prop}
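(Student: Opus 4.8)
The plan is to show that the homotopy in Item~\ref{homotopyQEnd} of Algorithm~\ref{algDeform} correctly transports the isolated points of $\cY\cap\cS\cap\cL^\bfd$ to a superset of the isolated points of $\cY\cap\cG\cap\cL^\bfd$, and that the subsequent cleanup step removes exactly the extraneous points. First I would fix $\bfd$ with $\bfW^\bfd(\cY\cap\cS)\in\mfW(\cY\cap\cS)$ and restrict attention to the curve-like slice $\cY\cap\cL^\bfd$; by Proposition~\ref{prop:Cultivation} the start points supplied are exactly the isolated points of $\cY\cap\cS\cap\cL^\bfd$, so the hypothesis needed to invoke coefficient-parameter / parameter homotopy theory \cite{CoeffParam} for the family $\{\cS_\lambda\}$ interpolating $\cS$ and $\cG$ (both hypersurfaces of degree $(g_1,\dots,g_k)$, neither containing a component of $\cY$) is met. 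The key point is that within this one-parameter family the number of isolated solutions of $\cY\cap\cL^\bfd\cap\cS_\lambda$ is constant for generic $\lambda$, and $\cS$, $\cG$ being of the same multidegree means the generic member and the two endpoints all lie in the same family; hence every isolated point of $\cY\cap\cG\cap\cL^\bfd$ is the endpoint of some path started at an isolated point of $\cY\cap\cS\cap\cL^\bfd$.

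Next I would address why we may get \emph{extra} endpoints, namely points of $Q^\bfd$ that are nonisolated in $\cY\cap\cG\cap\cL^\bfd$: this happens precisely when an irreducible component of $\cY\cap\cG$ of the wrong dimension (or an embedded/excess-dimensional piece) meets the general slice $\cL^\bfd$, or when paths converge to a positive-dimensional component of $\cY\cap\cG\cap\cL^\bfd$. Since $\cG$ contains no irreducible component of $\cY$, the top-dimensional part of $\cY\cap\cG$ has the expected dimension $\dim\cY-1=|\bfd|$, so after slicing by the general $\cL^\bfd$ the expected points survive; the non-isolated points in $Q^\bfd$ are exactly those endpoints lying on lower-dimensional or excess components, and these are identified and discarded by a local-dimension test such as \cite{LDT}, as the algorithm prescribes. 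Thus $Q^\bfd$, after the removal, equals the set of isolated points of $\cY\cap\cG\cap\cL^\bfd$, which is by definition $\bfw^\bfd(\cY\cap\cG)$.

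Finally I would check that ranging over all $\bfW^\bfd(\cY\cap\cS)\in\mfW(\cY\cap\cS)$ produces \emph{every} $\bfd$ for which $\bfw^\bfd(\cY\cap\cG)$ is nonzero: since $\cY\cap\cG\subseteq\cY\cap\cS$ fails in general, I instead argue that $\dim(\cY\cap\cG)=\dim(\cY\cap\cS)=\dim\cY-1$ (again using that $\cG$ contains no component of $\cY$) and that the multidegree support of $\cY\cap\cG$ is contained in that of $\cY\cap\cS$ because both are obtained from the multidegree of $\cY$ by the same Chow-ring multiplication by the class of a degree-$(g_1,\dots,g_k)$ hypersurface — this is the content of Proposition~\ref{prop:Cultivation} combined with the invariance of the class of $\cG$ versus $\cS$. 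Hence no $\bfd$ with nonempty witness point set is omitted, and assembling the $\bfW^\bfd(\cY\cap\cG)$ gives the complete collection $\mfW(\cY\cap\cG)$ in the sense of Definition~\ref{defMPC}. The main obstacle I anticipate is the careful justification of the deformation step: one must rule out path failures (paths diverging or crossing) and argue that the finitely many isolated endpoints obtained genuinely account for the full isolated locus of $\cY\cap\cG\cap\cL^\bfd$, which is exactly where the ``same multidegree'' hypothesis on $\cS$ and $\cG$ and the parameter-homotopy genericity are doing the real work.
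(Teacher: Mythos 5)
Your proof is correct and takes essentially the same approach as the paper: invoke coefficient-parameter homotopy theory \cite{CoeffParam} to conclude that the endpoints of $\bfHbar(\cY\cap\cL^\bfd,\cS\to\cG)$ form a superset of the isolated points of $\cY\cap\cG\cap\cL^\bfd$, then remove the nonisolated points via a local dimension test \cite{LDT}. Your additional observation — that the multidegree support of $\cY\cap\cG$ is contained in that of $\cY\cap\cS$ (so no $\bfd$ with $\bfw^\bfd(\cY\cap\cG)\neq\emptyset$ is omitted from the loop) — is implicit in the paper's appeal to \cite{CoeffParam} but worth making explicit; note only that the discarded endpoints lie on \emph{higher}-dimensional components of $\cY\cap\cG$ meeting $\cL^\bfd$ in positive-dimensional sets, not on lower-dimensional ones.
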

\begin{proof}
By \cite{CoeffParam}, 
the set of endpoints of 
$\bfHbar\left(\cY\cap\cL^\bfd,\cS\to\cG\right)$
starting at the isolated points of $\cY\cap\cS\cap\cL^\bfd$
consists of a superset of the isolated points
of $\cY\cap\cG\cap\cL^\bfd$.  Hence, 
removing the nonisolated points, e.g., using \cite{LDT},
yields the isolated points of $\cY\cap\cG\cap\cL^\bfd$.
\end{proof}

\begin{example}
Consider $\cV$ to be as in Ex.~\ref{parabola}
with $L^{(1,0)} = \{x_0 - 2x_1\}$
and $L^{(0,1)} = \{y_0 - y_1\}$. 
In particular, $\mfW(\cV) = \{\bfW^{(1,0)}(\cV),
\bfW^{(0,1)}(\cV)\}$
where $|\bfw^{(1,0)}(\cV)| = 1$
and $|\bfw^{(0,1)}(\cV)| = 2$.
Consider computing $\cV\cap\cG$
where $\cG$ is defined
by $27x_0y_0-50y_0x_1-25x_0y_1+50x_1y_1=0$
having degree $(g_1,g_2) = (1,1)$.

We can simplify Algorithm~\ref{algCultivate} by
taking $s_1^{(1)} = x_0 - 2x_1$
and $s_2^{(1)} = y_0 - y_1$.  Hence,
we have that
$\mfW(\cV\cap\cS) = \{\bfW^{(0,0)}(\cV\cap\cS)\}$
where $\bfw^{(0,0)}(\cV\cap\cS) = 
\bfw^{(1,0)}(\cV)\cup \bfw^{(0,1)}(\cV)$
consisting of three points.

For Algorithm~\ref{algDeform},
we only need to consider $\bfd = (0,0)$
for which Item~\ref{homotopyQEnd} 
deforms from the three paths 
defined by deforming $\cS$ to $\cG$ along
$\cV$ as shown
in Figure~\ref{fig:parabolaDegen}.
The set of three endpoints is $\cV\cap\cG$.

\begin{figure}
\floatbox[{\capbeside\thisfloatsetup{capbesideposition={left,top},capbesidewidth=8cm}}]{figure}[\FBwidth]
{\caption{\small{
Deforming from $\cS$ to~$\cG$ along the parabola
$\cV$ in the affine chart defined by $x_0=y_0=1.$}
}\label{fig:parabolaDegen}}
{\includegraphics[scale=0.35]{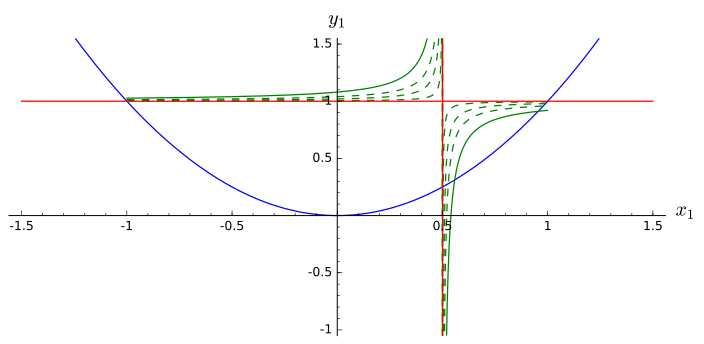}} 
\end{figure}
\end{example}

\subsection{Multiregeneration algorithm}\label{Sec:RegenAlg}

With Algorithms~\ref{algCultivate}
and~\ref{algDeform}, we are ready to describe
the full multiregeneration process for a polynomial
system $V := \{G_1,\dots,G_\ell\}$ 
which defines $\cV\subset\ppp$.  
The following computes witness set collections 
for $\cap_{i=1}^\iota \cG_i$ in sequence.

\begin{algorithm}\label{algRegeneration}
[Multiregeneration]

Given $V = \{G_1,\dots,G_\ell\}$, compute
a wintess set collection $\mfW(\cV)$.

\begin{enumerate}
\item Initialize a multiprojective witness set collection 
$\mfW(\ppp)$ by solving a linear system of equations $L^{(n_1,n_2,\dots,n_k)}=0$
\item For $\iota=0,\dots,\ell-1$
\begin{enumerate}
\item Define $\cX:=\cap_{i=1}^\iota\cG_i$ and $\cG:=\cG_{\iota+1}$.
\item Define $\cY$ to be the union of irreducible components of $\cX$ not contained in $\cG$. 
\item Define $\cZ$ to be the union of irreducible components of $\cX$ contained in $\cG$. 
\item Initialize the witness set collections  $\mfW(\cY)$ and $\mfW(\cZ)$ to each be the empty set. 
\item  For each $\bfW^\bfe(\cX)\in\mfW(\cX)$, do:
\begin{enumerate}
\item\label{defUbfe} Partition 
$\bfw^\bfe(\cX) = N^\bfe \sqcup U^\bfe$ where 
$N^\bfe = \bfw^\bfe(\cX)\setminus\cG$ and
$U^\bfe = \bfw^\bfe(\cX)\cap\cG$.
\item Append to $\mfW(\cY)$ the witness set 
$\bfW^\bfe(\cY):=\left\{X, L^\bfe, N^\bfe  \right\}$.
\item Append to $\mfW(\cZ)$ the witness set  
$\bfW^\bfe(\cZ):=\left\{X\cup\{G\}, L^\bfe, U^\bfe  \right\}$.
\end{enumerate}
\item For $\deg \cG = (g_1,\dots,g_k)$,
construct general linear forms  $\bigcup_{i=1}^k\left\{s_i^{(j)}:j=1,2,\dots,g_i\right\}$
and let~$S$ be their product as in $\eqref{defsij}$
which defines the hypersurface $\cS$.
\item Input $\mfW(\cY)$ and $\left\{s_i^{(j)}\right\}$
into Algorithm~\ref{algCultivate} to compute $\mfW(\cY\cap\cS)$.
\item Input $\mfW(\cY\cap\cS)$ and $\cG$ into  
 Algorithm~\ref{algDeform} to compute $\mfW(\cY\cap\cG)$.
\item Merge $\mfW(\cZ)$ and $\mfW(\cY\cap\cG)$ to
form $\mfW(\cap_{i=1}^{\iota+1} \cG_i)$.
\end{enumerate}
\item Return $\mfW(\cV) = \mfW(\cap_{i=1}^{\ell} \cG_i)$.
\end{enumerate}
\end{algorithm}

\begin{prop}\label{prop:RegenAlg}
Algorithm~\ref{algRegeneration} correctly
returns the multiprojective witness set
collection $\mfW(\cV)$.
\end{prop}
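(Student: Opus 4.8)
The plan is to argue by induction on the loop counter $\iota$, with loop invariant that at the start of iteration $\iota$ the algorithm holds a valid multiprojective witness set collection $\mfW(\cap_{i=1}^\iota\cG_i)$; Propositions~\ref{prop:Cultivation} and~\ref{prop:Deform}, together with the correctness of the component-splitting step, then supply the inductive step. For the base case $\iota=0$ one has $\cap_{i=1}^0\cG_i=\ppp$: since $L^{(n_1,\dots,n_k)}$ consists of $n_i$ general linear forms in the unknowns of $\bP^{n_i}$ for each $i$, its zero set $\cL^{(n_1,\dots,n_k)}$ meets $\ppp$ in exactly one point, and $(n_1,\dots,n_k)$ is the only $\bfe\le(n_1,\dots,n_k)$ with $|\bfe|=\dim\ppp$; as $\ppp$ is irreducible, the single witness set produced by the initialization is exactly $\mfW(\ppp)$.

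For the inductive step, assume $\mfW(\cX)$ is valid for $\cX=\cap_{i=1}^\iota\cG_i$, so that it contains a (possibly empty) witness set $\bfW^\bfe(\cX)$ for every $\bfe\le(n_1,\dots,n_k)$, and set $\cG=\cG_{\iota+1}$. First I would check that the split into $\cY$ (components of $\cX$ not contained in $\cG$) and $\cZ$ (components contained in $\cG$) is faithfully carried out at the level of witness points. Because each $L^\bfe$ is general for $\cX$ and $\bfw^\bfe(\cX)$ is the set of isolated points of $\cX\cap\cL^\bfe$, every point of $\bfw^\bfe(\cX)$ is a smooth point of $\cX$ lying on a unique component $C$ with $\dim C=|\bfe|$, and such a point lies on $\cG$ precisely when $C\subseteq\cG$: if $C\not\subseteq\cG$ then $C\cap\cG$ is a proper closed subvariety of $C$ of dimension $<|\bfe|$, which a general $\cL^\bfe$ avoids. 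Hence $U^\bfe=\bfw^\bfe(\cX)\cap\cG$ collects exactly the witness points supported on $\cZ$ and $N^\bfe=\bfw^\bfe(\cX)\setminus\cG$ those supported on $\cY$; since a slice general for $\cX$ is general for the subvarieties $\cY$ and $\cZ$, and $X$ (respectively $X\cup\{G\}$) is a witness system for $\cY$ (respectively $\cZ$), the collections $\mfW(\cY)$ and $\mfW(\cZ)$ assembled in the loop are valid.

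Next I would invoke the two sub-algorithm results: $\cS$ is a general union of hyperplanes of degree $\deg\cG=(g_1,\dots,g_k)$ containing no component of $\cY$, so Proposition~\ref{prop:Cultivation} yields $\mfW(\cY\cap\cS)$, and then $\cS$ and $\cG$ have the same degree while, by construction of $\cY$, no component of $\cY$ lies in $\cG$, so Proposition~\ref{prop:Deform} yields $\mfW(\cY\cap\cG)$. Any $0$-dimensional components of $\cY$ are dropped at this stage, which is correct since they are not contained in $\cG$ and hence not in $\cX\cap\cG$. Finally, since $\cX\cap\cG=(\cY\cap\cG)\cup\cZ$ set-theoretically, merging $\mfW(\cZ)$ with $\mfW(\cY\cap\cG)$ — discarding any witness point of $\cY\cap\cG$ that lies on a component of $\cZ$, detected e.g.\ by the membership test of Section~\ref{sec:Membership} or by a nonisolation check in $\cX\cap\cG\cap\cL^\bfe$ — records exactly the isolated points of $\cX\cap\cG\cap\cL^\bfe$ for every $\bfe$, i.e.\ $\mfW(\cap_{i=1}^{\iota+1}\cG_i)$. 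Iterating the loop to $\iota=\ell-1$ then returns $\mfW(\cV)$.

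The step I expect to be the main obstacle is making the component-level and witness-point-level descriptions match up rigorously when the intermediate varieties $\cX$ are not equidimensional: one must ensure that a general slice of the correct dimension detects each dimension-$|\bfe|$ component exactly once while missing every lower-dimensional locus — the pairwise intersections of components, the loci $C\cap\cG$ for $C\not\subseteq\cG$, and any component of $\cY\cap\cG$ hidden inside $\cZ$ — so that the bookkeeping in the splitting and in the final merge neither loses a genuine component nor records a spurious one.
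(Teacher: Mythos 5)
Your proof follows essentially the same route as the paper's: both decompose $\cX = \cY\cup\cZ$ according to containment in $\cG$, identify $N^\bfe = \bfw^\bfe(\cY)$ and $U^\bfe = \bfw^\bfe(\cZ)$, invoke Propositions~\ref{prop:Cultivation} and~\ref{prop:Deform} to produce $\mfW(\cY\cap\cG)$, and conclude from the set-theoretic identity $\cX\cap\cG\cap\cL^\bfe = (\cY\cap\cG\cap\cL^\bfe)\cup(\cZ\cap\cL^\bfe)$. You make the induction and the base case explicit and verify more carefully that the $N^\bfe$/$U^\bfe$ split is faithful at the level of witness points, but none of that is a different idea.

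One point you raise is actually sharper than the paper's stated argument: you note that the merge must discard any isolated point of $\cY\cap\cG\cap\cL^\bfe$ supported on a component of $\cY\cap\cG$ that is \emph{embedded} in a component of $\cZ$ (for instance, when a component $C_0\subseteq\cY$ and a component $D_0\subseteq\cZ$ satisfy $\dim(C_0\cap D_0) = \dim C_0 - 1$, so $C_0\cap\cG$ has a component inside $D_0$). Algorithm~\ref{algDeform} removes only points nonisolated in $\cY\cap\cG\cap\cL^\bfd$, which such a point is not, and the paper's one-line ``the result follows immediately'' leaves this cleanup implicit in the word ``merge.'' Your proposal correctly flags that the merge must perform a membership or isolation check against $\cZ$ (equivalently, against $\cX\cap\cG$) to avoid recording spurious witness points. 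This is a real refinement of the argument as written, even though the overall strategy is the same.
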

\begin{proof}
For each $\bfe$, 
we have $N^\bfe = \bfw^\bfe(\cY)$
so that Algorithms~\ref{algCultivate}
and~\ref{algDeform} compute
$\mfW(\cY\cap\cG)$.

The set $\cZ = \overline{
\cX\setminus{\cY}}$ is the union
of irreducible components of $\cX$ 
which are contained in $\cG$.  
For each~$\bfe$, $U^\bfe = \bfw^\bfe(\cZ)$.
Since each irreducible component of $\cZ$ is
an irreducible component of $\cX\cap\cG$,
we know that each point in $U^\bfe$ 
is an isolated point in $\cX\cap\cG\cap\cL^\bfe$.  
The result now follows immediately from
$$\cX\cap\cG\cap\cL^\bfe = 
\left({\cY}\cup\cZ\right)\cap\cG\cap\cL^\bfe
= \left(\cY\cap\cG\cap\cL^\bfe\right) \cup \left(\cZ\cap\cL^\bfe\right).$$

\end{proof}

\begin{remark}
The performance of Algorithm~\ref{algRegeneration}
is based on the ordering of the polynomials
and the structure of the polynomials. 
Example systems which can be viewed using different multihomogenizations are considered in Section~\ref{sec:moreExamples}.
\end{remark}

\begin{remark}
As formulated, some of the paths which need
to be tracked in Algorithm~\ref{algRegeneration}
may be singular.  
Following the notation in the proof of Prop.~\ref{prop:RegenAlg}, this occurs when
an irreducible component of $\cY$ 
for $\iota < \ell$ has multiplicity greater than one
with respect to $\{G_1,\dots,G_{\iota}\}$.
One can use deflation (see Remark~\ref{Deflation})
to produce paths with are nonsingular on $(0,1]$.  

Another option is to apply
Algorithm~\ref{algRegeneration}
to a randomization of $V$ (see Section~\ref{sec:Randomization}).
In this case, all paths which need to be tracked
are nonsingular on $(0,1]$ by Bertini's Theorem
so that one does not need to deflate paths.  
Two drawbacks of randomization, as discussed in 
Section~\ref{sec:Randomization},
are the typical destruction of 
structure and the increase of degrees
resulting in more paths which need 
to be tracked.
\end{remark}

The following is an illustrative 
example using Algorithm~\ref{algRegeneration}.

\begin{example}\label{exRegeneration}
Consider the following polynomial system defined on $\bP^2\times\bP^2$:
$$V :=\{G_1,G_2,G_3\}:= \left\{\begin{array}{ccc} x_0 y_2 - x_2 y_1, &
x_1 y_2 - x_2 y_1, &
x_0 y_1 y_2 - x_1 y_0 y_2
\end{array}\right\}.$$
It is easy to verify that $\cV$ has four irreducible components:
\begin{itemize}
\item $\cS_1 = \{([x_0,x_1,x_2],[1,0,0])\}$ having dimension $2$ and degree $1\omega^{(2,0)}$;
\item $\cS_2 = \{([x_0,x_1,0],[y_0,y_1,0])\}$ having dimension $2$ and degree $1\omega^{(1,1)}$;
\item $\cC_1 = \{([x_0,x_0,x_2],[y_0,y_0,y_2])~|~x_2y_0 = x_0y_2\}$ having dimension $1$ and degree $1\omega^{(1,0)}+1\omega^{(0,1)}$.
\item $\cC_2 = \{([0,0,1],[y_0,0,y_2])\}$ having dimension $1$ and degree $1\omega^{(0,1)}$;
\end{itemize}
We demonstrate using 
Algorithm~\ref{algRegeneration} to compute witness
sets for the pure-dimensional components, i.e., $\cS_1\cup\cS_2$
and $\cC_1\cup\cC_2$, with a decomposition computed
in Ex.~\ref{exRegenerationDecomp}.

\smallskip
\paragraph{$\iota=0$} Since $\cX = \bP^2\times\bP^2$ 
and $\cG=\cG_1$ a hypersurface of degree $(1,1)$, 
the witness point for $\bP^2\times\bP^2$
is regenerated into two points
as summarized in the following chart.
$$
\begin{array}{c|c|c}
\bfe & (2,1) & (1,2) \\
\hline
|\bfw^\bfe(\cG_1)| & 1 & 1
\end{array}
$$

\smallskip
\paragraph{$\iota=1$} For both $\bfd=(2,1)$ and $\bfd=(1,2)$, $U^\bfd = \emptyset$ so that all points are ``nonsolutions'' and must be regenerated.  
Since $\cG=\cG_2$ is a hypersurface of
degree $(1,1)$, two start points are regenerated into
four points as summarized
in the following chart.
$$
\begin{array}{c|c|c|c}
\bfe & (2,0) & (1,1) & (0,2) \\
\hline
|\bfw^\bfe(\cG_1\cap\cG_2)| & 1 & 2 & 1
\end{array}
$$

\smallskip
\paragraph{$\iota=2$}
For $\bfd = (2,0)$, $N^\bfd = \emptyset$ so that no points
are regenerated.  The point in $U^\bfd$ 
is the witness point for~$\cS_1$.  
For $\bfd = (1,1)$, both $U^\bfd$ and $N^\bfd$ consist
of one point.  The point in $U^\bfd$ is 
a witness point for~$\cS_2$.
Since $\cG_3$ is a hypersurface of degree $(1,2)$,
regenerating this point yields two nonisolated
endpoints, one each on $\cS_1$ and $\cS_2$,
and one isolated endpoint which is a witness point for~$\cC_1$.
For $\bfd = (0,2)$, $N^\bfd$ consists of one point 
which must be regenerated.  This yields
two isolated endpoints, one each on $\cC_1$ and $\cC_2$.
Since $\cV=\cG_1\cap\cG_2\cap\cG_3$, the results for $\cV$ are summarized 
in the following chart.
$$
\begin{array}{c|c|c|c|c}
\bfe & (2,0) & (1,1) & (1,0) & (0,1) \\
\hline
|\bfw^\bfe(\cV)| & 1 & 1 & 1 & 2
\end{array}
$$
\end{example}

\section{Multiregeneration Examples}\label{sec:moreExamples}

The following provides some examples demonstrating  multiregeneration described in Algorithm~\ref{algRegeneration}.

\subsection{Comparison on a zero-dimensional variety}

As mentioned in the introduction, 
multihomogeneous homotopies \cite{MultiHomHomotopy}
were developed by observing a bihomogeneous 
structure when solving the inverse kinematics
problem for 6R robots.  Following \cite[Ex.~5.2]{BHSW13}
with $\times$ denoting the cross product and~$\circ$~denoting 
the dot product, the polynomial system $f(z_2,z_3,z_4,z_5)$
defined on $(\bC^3)^4$ is 
{\footnotesize
$$
f_1:=z_1 \circ z_2 - c_1,\quad
f_2:=z_5 \circ z_6 - c_5 $$
$$
(f_3,f_4,f_5):=\left(a_1 z_1\times z_2 + d_2 z_2 + a_2 z_2\times z_3 + d_3 z_3 + a_3 z_3\times z_4 
+d_4 z_4+a_4z_4\times z_5 + d_5z_5 + a_5 z_5\times z_6 - p\right)
$$
$$
(f_6,f_7,f_8):=(z_2\circ z_3-c_2,z_3\circ z_4-c_3,z_4\circ z_5-c_4),\quad
(f_9,f_{10},f_{11},f_{12}):=(z_2\circ z_2 - 1, 
z_4\circ z_4 - 1,
z_3\circ z_3 - 1 ,
z_5\circ z_5 - 1 )
$$
}%
%
with variables $z_2,z_3,z_4,z_5\in\bC^3$ where
$z_1,z_6,p\in\bC^3$, $a_i,c_i,d_i\in\bC$ are general constants.  
In particular, $f$ consists of $12$ polynomials in $12$ variables and generically has $16$ roots.  
We have selected the ordering so that the first two polynomials
are linear, the next six are multilinear, and the last four are quadratic.

We consider solving $f=0$ using 
various multihomogeneous homotopies, 
a polyedral homotopy, and 
various multiregenerations.
In particular, we consider using a 1-, 2-, and 4-homogeneous
setup.  Each setup corresponds with homogenizing $f$
to yield a polynomial system $V$ defined on a 
product of 1, 2, and 4 projective spaces 
based on natural groupings of the variables,
namely $\{z_2,z_3,z_4,z_5\}$, 
$\{z_2,z_4\}\star\{z_3,z_5\}$, and $\{z_2\}\star\{z_3\}\star\{z_4\}\star\{z_5\}$, respectively.
The corresponding multihomogeneous B\'ezout counts 
are 1024, 320, and 576, while the BKK root count is 288.
These four counts are the total number of paths one needs 
to track using a 1-, 2-, and 4-homogeneous homotopy and a polyhedral homotopy, respectively.

In the multiregeneration, we only use slices which
could lead to isolated solutions yielding partial
information about the multidegrees in the intermediate
stages.  Tables~\ref{Table:Regen} and~\ref{Table:Regen4}
summarize the multiregeneration computations by listing
the total number of start points
and the total number of points in the resulting witness sets
for each codimension.  

\begin{table}[!t]
\centering
{\scriptsize
\begin{tabular}{c||c|c||c|c|c}
\multicolumn{1}{c||}{} & \multicolumn{2}{c||}{\mbox{1-homogeneous \cite{Regen}}} & \multicolumn{3}{c}{\mbox{2-homogeneous}}  \\
\cline{2-6}
codim & start points & witness points & start points & witness points & multidegree \\
\hline
\rule{0pt}{2.5ex}1 & 1 & 1 & 1 & 1 & $1\omega^{(5,6)}$\\
\hline
\rule{0pt}{2.5ex}2 & 1 & 1 & 1 & 1 & $1\omega^{(5,5)}$ \\
\hline
\rule{0pt}{2.5ex}3 & 2 & 2 & 2 & 2 & $1\omega^{(4,5)} + 1\omega^{(5,4)}$\\
\hline
\rule{0pt}{2.5ex}4 & 4 & 4 & 4 & 4 & 
$1\omega^{(3,5)} + 2\omega^{(4,4)} + 1\omega^{(5,3)}$
\\
\hline
\rule{0pt}{2.5ex}5 & 8 & 8 & 8 & 8 & 
$1\omega^{(2,5)} + 3\omega^{(3,4)} +
3\omega^{(4,3)} + 1\omega^{(5,2)}$\\
\hline
\rule{0pt}{2.5ex}6 & 16 & 16 & 14 & 14 & 
$4\omega^{(2,4)} + 6\omega^{(3,3)} + 4\omega^{(4,2)}$\\
\hline
\rule{0pt}{2.5ex}7 & 32 & 32 & 20 & 20 & 
$10\omega^{(2,3)} + 10\omega^{(3,2)}$\\
\hline
\rule{0pt}{2.5ex}8 & 64 & 62 & 20 & 20 & 
$20\omega^{(2,2)}$\\
\hline
\rule{0pt}{2.5ex}9 & 124 & 90 & 40 & 34 & 
$34\omega^{(1,2)}$\\
\hline
\rule{0pt}{2.5ex}10 & 180 & 62 & 68 & 28 & $28\omega^{(0,2)}$\\
\hline
\rule{0pt}{2.5ex}11 & 124 & 44 & 56 & 40 & $40\omega^{(0,1)}$ \\
\hline
\rule{0pt}{2.5ex}12 & 88 & {\bf 16} & 80 & {\bf 16} & $16\omega^{(0,0)}$ \\
\hline \hline
\rule{0pt}{2.5ex}total & 644 & & 314 & & 
\end{tabular}
}
\caption{Summary of multiregeneration solving 
the inverse kinematics of 6R robot using 
a $1$- and $2$-homogeneous setup. 
 }
\label{Table:Regen}
\end{table}

\begin{table}[!ht]
\centering
\begin{scriptsize}
\begin{tabular}{c||c|c|c}
\multicolumn{1}{c||}{} & \multicolumn{3}{c}{\mbox{4-homogeneous}} \\
\cline{2-4}
codim & start points & witness points & multidegree \\
\hline
\rule{0pt}{2.5ex}1 & 1 & 1 & 
$1\omega^{(2,3,3,3)}$\\
\hline
\rule{0pt}{2.5ex}2 & 1 & 1 & 
$1\omega^{(2,3,3,2)}$\\
\hline
\rule{0pt}{2.5ex}3 & 4 & 4 & 
$1\omega^{(1,3,3,2)}
+1\omega^{(2,2,3,2)}
+1\omega^{(2,3,2,2)}
+1\omega^{(2,3,3,1)}$\\
\hline
\rule{0pt}{2.5ex}\multirow{2}{*}{4} & \multirow{2}{*}{14} & \multirow{2}{*}{11} & 
$2\omega^{(1,2,3,2)}
+1\omega^{(1,3,2,2)}
+1\omega^{(1,3,3,1)}
+1\omega^{(2,1,3,2)}$\\
\rule{0pt}{2.5ex}& & & 
$+~2\omega^{(2,2,2,2)}
+1\omega^{(2,2,3,1)}
+1\omega^{(2,3,1,2)}
+2\omega^{(2,3,2,1)}$ \\
\hline
\rule{0pt}{2.5ex}\multirow{2}{*}{5} & \multirow{2}{*}{28} & \multirow{2}{*}{18} & 
$3\omega^{(1,2,2,2)}
+2\omega^{(1,2,3,1)}
+1\omega^{(1,3,1,2)}
+2\omega^{(1,3,2,1)}$\\
\rule{0pt}{2.5ex} & & & 
$+~3\omega^{(2,1,2,2)}
+1\omega^{(2,1,3,1)}
+3\omega^{(2,2,1,2)}
+3\omega^{(2,2,2,1)}$\\
\hline
\rule{0pt}{2.5ex}6 & 18 & 18 & 
$6\omega^{(1,1,2,2)}
+3\omega^{(1,1,3,1)}
+4\omega^{(1,2,1,2)}
+5\omega^{(1,2,2,1)}$\\
\hline
\rule{0pt}{2.5ex}7 & 18 & 16 &
$8\omega^{(1,1,1,2)}+
8\omega^{(1,1,2,1)}$\\
\hline
\rule{0pt}{2.5ex}8 & 16 & 14 & $14\omega^{(1,1,1,1)}$ \\
\hline
\rule{0pt}{2.5ex}9 & 28 & 24 & $24\omega^{(0,1,1,1)}$\\
\hline
\rule{0pt}{2.5ex}10 & 48 & 24 & $24\omega^{(0,0,1,1)}$\\
\hline
\rule{0pt}{2.5ex}11 & 48 & 20 & $20\omega^{(0,0,0,1)}$\\
\hline
\rule{0pt}{2.5ex}12 & 40 & {\bf 16} & $16\omega^{(0,0,0,0)}$\\
\hline \hline
\rule{0pt}{2.5ex}total & 264 & &
\end{tabular}
\end{scriptsize}
\caption{
Summary of multiregeneration solving 
the inverse kinematics of 6R robot using 
a $4$-homogeneous setup. 
}
\label{Table:Regen4}
\end{table}

\subsection{Comparison on a system with singular solutions}\label{Sec:Singular}
Our next example arises from computing the 
Lagrange points of a three-body system where the
small body is assumed to exert a negligible gravitational force 
on the two other bodies, e.g., Earth, Moon, and man-made satellite.
The nondimensionalized system $f$ defined
on $\bC^6$ derived in \cite[\S~5.5.1]{BHSW13},
with variables $\rho_1,w,\delta_{13},\delta_{23},x,y$
and parameter $\mu$, the ratio
of the masses of the two large bodies, is
{\small
$$
\left\{
\begin{array}{c}
w \rho_1 - 1,~~~
w \rho_2 - \mu, ~~~
(\rho_1 - x)^2 + y^2 - \delta_{13}^2,~~~
(\rho_2 + x)^2 + y^2 - \delta_{23}^2, \\
(w\delta_{13}^3\delta_{23}^3 - \mu \delta_{23}^3 - \delta_{13}^3)x +
\rho_1 \mu \delta_{23}^3 - \rho_2 \delta_{13}^3,~~~
(w\delta_{13}^3\delta_{23}^3 - \mu \delta_{23}^3 - \delta_{13}^3)y
\end{array}\right\}$$
}where $\rho_2 = 1 - \rho_1$.  
For generic $\mu$, this system has $64$ solutions counting multiplicity: 32~of multiplicity~1, 4 of multiplicity 2,
2 of multiplicity 3, and 2 of multiplicity 9.

We first consider solving $f=0$ using a
multiregeneration applied to $V$, 
a homogenization of $f$ based on using 
a $1$-homogeneous and a $5$-homogeneous structure
defined by $\{\rho_1\}\star\{w\}\star\{\delta_{13}\}\star\{\delta_{23}\}\star\{x,y\}$.
This 5-homogeneous structure was selected
since it minimizes the multihomogeneous B\'ezout count
amongst all possible partitions, namely 248
compared with the classical B\'ezout count of 1024.
Even though the system has singular solutions, 
they only arise at the final stage of the multiregeneration 
with this setup so that all paths which
need to be tracked are nonsingular~on~$(0,1]$.
The results are summarized
in Table~\ref{Table:RegenLagrange}
with the number of witness points counted with multiplicity.  

\begin{table}[!hb]
\centering
{\scriptsize
\begin{tabular}{c||c|c||c|c|c}
\multicolumn{1}{c||}{} & \multicolumn{2}{c||}{\mbox{1-homogeneous \cite{Regen}}} & \multicolumn{3}{c}{\mbox{5-homogeneous}} \\
\cline{2-6}
codim & start points & witness points & start points & witness points& multidegree \\
\hline
\rule{0pt}{2.5ex}1 & 2 & 2 & 2 & 2 & 
$1\omega^{(1,0,1,1,2)}+
1\omega^{(0,1,1,1,2)}$\\
\hline
\rule{0pt}{2.5ex}2 & 4 & 1 & 2 & 1 & $1\omega^{(0,0,1,1,2)}$\\
\hline
\rule{0pt}{2.5ex}3 & 2 & 2 & 4 & 4 & 
$2\omega^{(0,0,1,1,1)}
+2\omega^{(0,0,0,1,2)}$\\
\hline
\rule{0pt}{2.5ex}4 & 4 & 4 & 16 & 14 & 
$2\omega^{(0,0,1,1,0)}
+4\omega^{(0,0,1,0,1)}
+4\omega^{(0,0,0,1,1)}
+4\omega^{(0,0,0,0,2)}$\\
\hline
\rule{0pt}{2.5ex}5 & 32 & 28 & 48 & 48 & 
$10\omega^{(0,0,1,0,0)}
+10\omega^{(0,0,0,1,0)}
+28\omega^{(0,0,0,0,1)}$\\
\hline
\rule{0pt}{2.5ex}6 & 224 & {\bf 64} & 88 & {\bf 64} & $64
\omega^{(0,0,0,0,0)}$\\
\hline \hline
\rule{0pt}{2.5ex}total & 268 & & 160 & 
\end{tabular}
}
\caption{Summary of using multiregeneration
to solve the Lagrange points system}
\label{Table:RegenLagrange}
\end{table}

Due to the presence of singular solutions, we also
applied these two multiregenerations 
to the homogenization 
of a perturbation of $f$ as suggested 
in \cite{PerturbedRegen}, namely $f + \epsilon$ 
where $\epsilon\in\bC^6$ is general.  
From the isolated roots of $f + \epsilon$,
the isolated roots of $f$ are recovered using the 
parameter homotopy $f + t\cdot\epsilon = 0$.
The results matched those presented in 
Table~\ref{Table:RegenLagrange}
with the only change being the endpoints in 
the final stage are all nonsingular.

\subsection{Comparison on a positive-dimensional variety}

Our last example of multiregeneration
arises from computing a rank-deficiency set of 
a skew-symmetric matrix.  In particular, consider the system 
{\footnotesize
$$f(x,\lambda) = \left[\begin{array}{l}
S(x)\cdot B\cdot
\left[
\begin{array}{cccccc} 1 & 0 & \lambda_1 & \lambda_2 & \lambda_3 & \lambda_4 \end{array}\right]^T
 \\
S(x)\cdot B\cdot
\left[\begin{array}{cccccc} 0 & 1 & \lambda_5 & \lambda_6 & \lambda_7 & \lambda_8 \end{array}\right]^T
\end{array}\right]
\hbox{~~~where~~~}
S(x) = \left[\begin{array}{cccccc} 
0 & x_1 & x_2 & x_3 & x_4 & x_5 \\
-x_1 & 0 & x_6 & x_7 & x_8 & x_9 \\
-x_2 & -x_6 & 0 & x_{10} & x_{11} & x_{12} \\
-x_3 & -x_7 & -x_{10} & 0 & x_{13} & x_{14} \\
-x_4 & -x_8 & -x_{11} & -x_{13} & 0 & x_{15} \\
-x_5 & -x_9 & -x_{12} & -x_{14} & -x_{15} & 0 
\end{array}\right]
$$}and $B\in\bC^{6\times 6}$ is a fixed general matrix.

We focus on the skew-symmetric matrices $S(x)$ of rank $4$.
That is, we aim to compute 
$$\overline{\{(x^*,\lambda^*)\in\bC^{15}\times\bC^8~|~{\rm rank}~S(x^*) = 4 \hbox{~and~} \lambda^* \hbox{~is the unique solution to~}f(x^*,\lambda) = 0\}}.$$
Since~$f$ consists of $12$ polynomials and $\lambda\in\bC^8$, 
the codimension of the projection onto $x$ is at most~$4$.
Therefore, from a witness set computational standpoint, 
we can intrinsically restrict to a $4$-dimensional
linear space in $\bC^{15}$, namely by fixing
a general matrix $A\in\bC^{15\times4}$ and vector $b\in\bC^{15}$,
and considering the elements of $\bC^{15}$ of the form
$Ay + b$ where $y\in\bC^4$.  Thus, we consider the polynomial system
$$F(y,\lambda) = f(Ay+b,\lambda).$$
We consider polynomial systems $V$
arising from homogenizing $F$ using 
a $1$-homo\-geneous ($\bC^{12}$) 
and $2$-homogeneous ($\bC^4\times\bC^8$) structure.
Since each polynomial in $V$, respectively,
has degree $2$ and multidegree $(1,1)$, we 
apply multiregeneration to a randomization of $V$.
In particular, the 1-homogeneous regeneration setup
is  equivalent to the regenerative cascade \cite{HSW11}.
Table~\ref{Table:Regen2} summarizes the 
multiregenerations for each codimension 
by listing the total number of start points,
the total number of endpoints that satisfy
the system and are isolated ($U^\bfd$), 
the total number of nonisolated solutions (removed
in multiregeneration algorithm),
and the total number of ``nonsolutions'' ($N^\bfd$).
The only difference in this example between
the 1- and 2-homogeneous regenerations are the number
of start points of paths which need to be tracked
yielding two additional costs for the $1$-homogeneous
setup.  The first is the added cost in computing the 
additional start points themselves 
and the second is the typical added cost of tracking
these extra paths which often become ill-conditioned
near the end.

In this example, the solution set of $F = 0$ is actually 
an irreducible component of codimension $9$ and 
degree~$45$ in $\bC^{12}$ 
with multidegree 
$3\omega^{(3,0)}
+6\omega^{(2,1)}
+12\omega^{(1,2)}
+24\omega^{(0,3)}$
in $\bC^{4}\times\bC^{8}$.
The closure of the projection of this irreducible
component onto the $y$ coordinates, namely 
$$\overline{\{y\in\bC^{4}~|~{\rm rank~}S(Ay+b)\leq 4\}},$$
is a hypersurface of degree $3$
that is defined by the~cubic~polynomial~$\sqrt{\det S(Ay+b)}$.

\begin{table}
\centering
{\scriptsize
\begin{tabular}{c||c|c|c|c||c|c|c|c}
& \multicolumn{4}{c||}{\mbox{1-homogeneous \cite{HSW11}}} & \multicolumn{4}{c}{\mbox{2-homogeneous}}  \\
\cline{2-9}
codim & start points & iso. solns & noniso. solns & nonsolns & start points & iso. solns & noniso. solns & nonsolns \\
\hline
1 & 2 & 0 & 0 & 2 & 2 & 0 & 0 & 2 \\
\hline
2 & 4 & 0 & 0 & 4 & 4 & 0 & 0 & 4 \\
\hline
3 & 8 & 0 & 0 & 8 & 8 & 0 & 0 & 8 \\
\hline
4 & 16 & 0 & 0 & 16 & 16 & 0 & 0 & 16  \\
\hline
5 & 32 & 0 & 0 & 31 & 31 & 0 & 0 & 31 \\
\hline
6 & 62 & 0 & 0 & 57 & 57 & 0 & 0 & 57 \\
\hline
7 & 114 & 0 & 0 & 99 & 99 & 0 & 0 & 99 \\
\hline
8 & 198 & 0 & 0 & 163 & 163 & 0 & 0 & 163 \\
\hline
9 & 326 & 45 & 0 & 178 & 255 & 45 & 0 & 178 \\
\hline
10 & 356 & 0 & 84 & 104 & 260 & 0 & 84 & 104\\
\hline
11 & 208 & 0 & 66 & 24 & 136 & 0 & 66 & 24\\
\hline
12 & 48 & 0 & 20 & 0 & 24 & 0 & 20 & 0 \\
\hline \hline
total & 1374 & & & & 1055 & & & 
\end{tabular}
}
\caption{Summary of various regeneration procedures for positive-dimensional solving related to the rank-deficiency system}
\label{Table:Regen2}
\end{table}

\section{Decomposition and a trace test}\label{sectionTrace}
 
After computing witness point sets for multihomogeneous
varieties using multiregeneration described in Section~\ref{sec:multiregenerate}, 
the last remaining piece is to decompose into
multiprojective witness sets for each 
irreducible component.  One key part of this decomposition
is a {\em trace test}, first proposed
in \cite{TraceTest} for affine and projective varieties,
which validates that a collection of witness points 
forms a witness point set for a union of irreducible components.
We extend this to the multiprojective case in 
Section~\ref{sec:TraceTest}.  
The original motivation for such a test
was to verify the generic number of solutions to a parameterized
system with examples of this presented in Section~\ref{appTrace}.

Equipped with the membership test from Section~\ref{sec:Membership}, the trace test from Section~\ref{sec:TraceTest}, and monodromy~\cite{Monodromy},
we complete the decomposition in Section~\ref{sec:Decomposition}.


\subsection{Trace test}\label{sec:TraceTest}

Given a collection of witness points lying on a pure-dimensional
multiprojective variety~$V$, the goal is to verify that 
they form a collection of witness point sets for some 
variety $Z\subset V$.  We accomplish this by
generalizing the trace test proposed in \cite{TraceTest} for affine and projective varieties to the multiprojective 
setting.  To that end, for $V\subset\ppp$,
we first fix generic 
hyperplanes $\cH_i$ at infinity for each projective space 
$\mathbb{P}^{n_{i}}$.
Let $H_i$ be the polynomial defining $\cH_i$,
\begin{equation}\label{eq:H}
H = \prod_{i=1}^n H_i \hbox{~~~and~~~} \cH = \bigcup_{i=1}^n \cH_i
\end{equation}
so that $\cH$ is defined by the polynomial $H$.  

With this setup, we perform our trace using 
computations on the product space $\ppp$ 
with a {\em general coordinate}
from the Segre product space $\bP^{(n_1+1)\cdots(n_k+1)-1}$
defined as follows.

\begin{defn}
A {\em general coordinate} in $\bP^{(n_1+1)\cdots(n_k+1)-1}$
derived from $\ppp$ has the form
\begin{equation}\label{eq:GenCoordinate}
\rho:=(\Box x_0 + \cdots + \Box x_{n_1})\cdot(\Box y_0 + \cdots + \Box y_{n_2})\cdots(\Box z_0 + \cdots + \Box z_{n_k})
\end{equation}
where $(x,y,\dots,z)$ represents the coordinates
in $\ppp$.
\end{defn}

Hyperplanes in the Segre product space
$\bP^{(n_1+1)\cdots(n_k+1)-1}$
are defined by polynomials which are multilinear,
i.e., linear in the variables for each $\bP^{n_i}$.
We say that $\cR\subset\ppp$ is a {\em Segre linear slice}
defined by the polynomial $R$ if $R$ is multilinear.
For example, $\cH$ in \eqref{eq:H} is a Segre linear slice
since the polynomial $H$ in \eqref{eq:H} is multilinear.
With this, we are able to define the trace.

\begin{defn}\label{defnTrace}
Let $\cV\subset\ppp$ be a pure $c$-dimensional multiprojective
variety, $1\leq c' \leq c$, $\cR_1,\dots,\cR_{c'}$
be general Segre linear slices defined by $R_1,\dots,R_{c'}$,
respectively, and $\cL^\bfe$ be a linear space
of codimension $|\bfe|=c-c'$ defined by $L^\bfe$.  
Let ${\bf m}\subset \cV\cap\cL^\bfe\cap\cR_1\cap\cdots\cap\cR_{c'}$
and consider the homotopy
\begin{equation}\label{homotopyTrace}
{{\bf H}}^{{\bf e}}\left(\cV\cap\cL^\bfe,\cR\to \cR+\cH\right)=\begin{cases}
V\\
L^{{\bf e}}\\
R_{1}+\left(1-t\right)H\\
\vdots \\
R_{c'}+\left(1-t\right)H
\end{cases}
\end{equation}
where $H$ as in \eqref{eq:H}.  
For each $m\in{\bf m}$, let $m(t)$ denote the path 
defined by this homotopy starting at $m$.  
Then, the \textit{trace} of ${\bf m}$
with respect to ${{\bf H}}^{{\bf e}}$ and a general coordinate $\rho$ is the average of the $\rho$ coordinate of the paths $m(t)$, i.e.,
$$\trace_{\bf m}^\rho{{\bf H}}^{{\bf e}}\left(\cV\cap\cL^\bfe,\cR\to \cR+\cH\right)(t) :=\frac{1}{|{\bfm}|}\sum_{m\in{\bf m}}\rho(m(t)).$$
The trace is said to be \textit{affine linear} with respect to $\cH_1,\dots,\cH_k$ if it is a linear function in $t$
when restricted to the affine chart where $H_1=\cdots=H_k=1$. 
\end{defn}

We illustrate with the following example.

\begin{example}\label{ex:Trace}
Consider the pure $1$-dimensional variety $\cV\subset\bP^2\times\bP^1$ defined by
$$V=\{(y_1^2-y_0^2) x_1 - y_1^2 x_0 ,\,\,\, (y_1^2-y_0^2) x_2 - y_1^2 x_0\}.$$ 
In particular, $\cV$ is the union of three irreducible varieties: 
\begin{itemize}
\item $\cL_1 = \{([0,x_1,x_2],[1,1])\}$,
\item $\cL_2 = \{([0,x_1,x_2],[1,-1])\}$, and
\item $\cC = \{([x_0,x_1,x_1],[y_0,y_1])~|~(y_1^2-y_0^2)x_1 = y_1^2 x_0\}$.
\end{itemize}
For simplicity, we take $H_1 = x_2$ and $H_2 = y_1$,
and the Segre linear slice $\cR_1$ be defined by
$$R_1= (6x_0/7 + 3x_1/5+ 2x_2/7)(y_0-y_1/2).$$
The set $\cV\cap\cR_1$ consists of five points $m_1,\dots,m_5$,
say where $\cL_1\cap\cR_1 = \{m_1\}$,
$\cL_2\cap\cR_1 = \{m_2\}$,
and $\cC\cap\cR_1 = \{m_3,m_4,m_5\}$.
With the general coordinate
$$\rho = (2x_0/7 - 5x_1/12 + 3x_2/17)(4y_0/13 - 3y_1/14),$$
we consider the homotopy 
$${{\bf H}}^{{\bf e}}\left({\cV},\cR_1\to \cR_1+H\right)
\begin{cases}
(y_1^2-y_0^2)x_1 - y_1^2 x_0 \\
(y_1^2-y_0^2)x_2 - y_1^2 x_0 \\
(6x_0/7+3x_1/5+2x_2/7)(y_0-y_1/2) + (1-t)x_2y_1.
\end{cases}
$$
Restricting to $x_2 = y_1 = 1$,
Figure~\ref{fig:SegreTrace} plots 
the trace of this homotopy for two sets of
start points, namely $\{m_1,m_2,m_3,m_4\}$
and $\{m_1,m_2,m_3,m_4,m_5\}$.
This plot shows that the trace of $\{m_1,m_2,m_3,m_4\}$
is not affine linear while the trace 
$\{m_1,m_2,m_3,m_4,m_5\}$ is indeed affine linear.

\begin{figure}[!htb]
\centering
\includegraphics[scale=0.3]{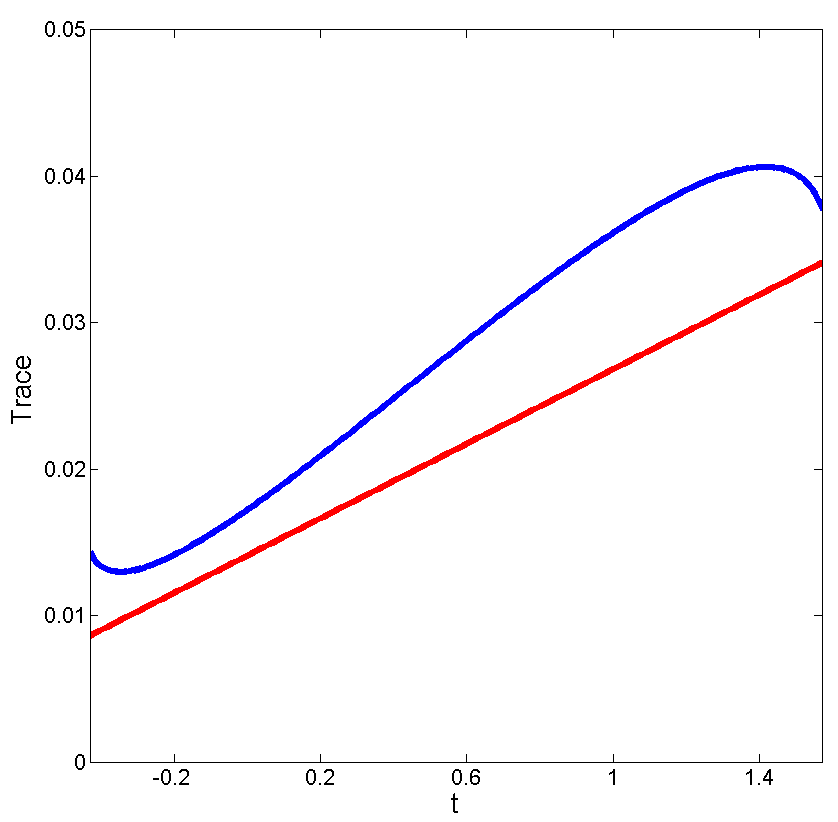}
\caption{Plot of the trace for $\{m_1,\dots,m_4\}$ 
in blue (nonlinear) and $\{m_1,\dots,m_5\}$ in red (linear).}
\label{fig:SegreTrace}
\end{figure}
\end{example}

As with the classical trace test \cite{TraceTest},
the trace with start points ${\bf m}$ is affine linear if and only if ${\bf m}$ is a collection of witness points
for a multiprojective variety.

\begin{theorem}[Trace test]\label{thm:TraceTest}
Suppose that $\cV\subset\ppp$ is a pure $c$-dimensional
variety defined by $V$, 
$\cH_i$ is a general hyperplane
at infinity defined by $H_i$ for $\bP^{n_i}$, 
$\rho$ is a general coordinate,
$\cR_1,\dots,\cR_{c'}$ are general Segre linear slices 
defined by $R_1,\dots,R_{c'}$ for $1\leq c'\leq c$,
$\cL^\bfe$ is a linear space of codimension $|\bfe| = c-c'$
defined by $L^\bfe$, and ${\bf m}\subset\cV\cap\cL^\bfe\cap\cR_1\cap\cdots\cap\cR_{c'}$.
Then, $\trace_{\bf m}^\rho{{\bf H}}^{{\bf e}}\left(\cV\cap\cL^\bfe,\cR\to \cR+\cH\right)(t)$ is affine linear in $t$
if and only if there exists $Z\subset\cV\cap\cL^\bfe$,
which is a union of irreducible components
of dimension $c'$, such that
$Z\cap\cR_1\cap\cdots\cap\cR_{c'} = {\bf m}$.
\end{theorem}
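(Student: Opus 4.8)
The plan is to reduce the statement to the classical trace test of \cite{TraceTest} by passing to the Segre embedding. Set $\cV' := \cV\cap\cL^\bfe$. Since $\cL^\bfe$ is a general linear space of codimension $|\bfe| = c - c'$ and $\cV$ is pure $c$-dimensional, $\cV'$ is pure $c'$-dimensional, so the sets $Z$ appearing in the statement are exactly the unions of irreducible components of $\cV'$. Because the linear polynomials $L^\bfe$ are frozen throughout the homotopy \eqref{homotopyTrace}, the entire trace computation takes place inside $\cV'$; hence it suffices to prove that, for ${\bf m}\subseteq\cV'\cap\cR_1\cap\cdots\cap\cR_{c'}$, the trace is affine linear in $t$ if and only if ${\bf m} = Z\cap\cR_1\cap\cdots\cap\cR_{c'}$ for some union $Z$ of irreducible components of $\cV'$.

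Next I would apply the Segre embedding $\sigma\colon\ppp\hookrightarrow\bP^N$, $N=(n_1+1)\cdots(n_k+1)-1$. Multilinear forms on $\ppp$ are precisely the pullbacks of linear forms on $\bP^N$, so $R_1,\dots,R_{c'}$, $H$, and $\rho$ correspond to linear forms $\tilde R_1,\dots,\tilde R_{c'},\tilde H,\tilde\rho$ on $\bP^N$, and general Segre linear slices correspond to general hyperplanes. Writing $\tilde\cV':=\sigma(\cV')$, this is a pure $c'$-dimensional projective variety whose irreducible components are the images of those of $\cV'$. Under $\sigma$, the homotopy \eqref{homotopyTrace} becomes the homotopy on $\tilde\cV'$ moving the hyperplanes $\{\tilde R_i=0\}$ to $\{\tilde R_i+\tilde H=0\}$ simultaneously and linearly in $t$; restricted to the affine chart $\bC^N=\bP^N\setminus\{\tilde H=0\}$, which pulls back to the chart $H_1=\cdots=H_k=1$, this is exactly a linear translate $\tilde\Lambda_t=\tilde\Lambda_0+tw$ of a general codimension-$c'$ affine linear space, and the trace of ${\bf m}$ is the average of the affine linear coordinate $\tilde\rho$ over the points of $\tilde\cV'\cap\tilde\Lambda_t$. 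This is an instance of the ``linear trace'' setup of \cite{TraceTest} on $\tilde\cV'\subset\bC^N$, which I would then invoke: a sub-collection has affine linear trace in $t$ if and only if it is the intersection of $\tilde\Lambda_t$ with a union of irreducible components of $\tilde\cV'$. The ``if'' direction is additivity of the (unnormalized) trace sum over the finitely many component-slices, each of which, being a complete linear section of an irreducible variety, has affine linear trace; the ``only if'' direction is that no cancellation among distinct components can occur for general data. Pulling back along $\sigma$ gives exactly the claimed equivalence.

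The one delicate point is that $\tilde H$ and $\tilde\rho$ are rank-one tensors, hence \emph{special} linear forms on $\bP^N$ rather than general ones. First, $\cH$ contains no irreducible component of $\cV'$: an irreducible variety contained in $\bigcup_i\{x^{(i)}\in\cH_i\}$ must lie in a single $\{x^{(i)}\in\cH_i\}$, impossible for a general hyperplane $\cH_i\subset\bP^{n_i}$; so $\tilde\cV'\setminus\{\tilde H=0\}$ is dense in $\tilde\cV'$ and the affine picture is legitimate. Second, the genericity of $\rho$ needed for the ``only if'' direction is mild: once $\cR_1,\dots,\cR_{c'}$ and $\cH$ are fixed, the classical argument shows that for a collection ${\bf m}$ that is not a union of component-slices, the set of linear forms $\ell$ making $t\mapsto\sum_{m\in{\bf m}}\ell(m(t))$ affine linear is a proper linear subspace of the linear forms on $\bP^N$; since the rank-one tensors span this space, they cannot all lie in that subspace, so a general rank-one $\rho$ avoids it. Likewise, as the $\tilde R_i$ are general, the hyperplanes $\tilde R_i+(1-t)\tilde H$ are general for general $t$, so the required transversality of the moving slices is unaffected by $\tilde H$ being special.

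I expect the main obstacle to be exactly this genericity bookkeeping in the last step: verifying that the rank-one constraints on $\rho$ and on $H=\prod_i H_i$ do not spoil the general-position and transversality hypotheses under which the classical trace test of \cite{TraceTest} is proved, and (for the ``only if'' direction) that the relevant ``bad'' loci are genuinely proper so that a general rank-one choice escapes them. The remaining ingredients --- pure-dimensionality of $\cV'=\cV\cap\cL^\bfe$, the dictionary between multilinear forms on $\ppp$ and linear forms on the Segre variety, and additivity of the trace over irreducible components --- are routine.
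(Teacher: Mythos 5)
Your proposal follows essentially the same route as the paper: pass to the Segre product space $\bP^{(n_1+1)\cdots(n_k+1)-1}$, invoke the classical trace test of \cite{TraceTest} there, and handle the specialness of the rank-one form $\rho$ by observing that the set of linear forms $\ell$ for which the trace is affine linear is a linear subspace of the dual that cannot contain all rank-one tensors when $\mathbf{m}$ is not a union of component-slices. The paper phrases this last step coordinate-by-coordinate (nonlinearity of the trace in some Segre coordinate $q_{i_1,\dots,i_k}$ implies nonlinearity for a general product $\alpha_{i_1}\beta_{i_2}\cdots\gamma_{i_k}$), which is the concrete form of your spanning argument.
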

\begin{proof}
By applying \cite[Thm.~3.6]{TraceTest}
to the Segre product space $\bP^{(n_1+1)\cdots(n_k+1)-1}$,
one has the result if the trace in every coordinate 
of $\bP^{(n_1+1)\cdots(n_k+1)-1}$ is affine linear.  
Clearly, if the trace is affine linear in every
coordinate of $\bP^{(n_1+1)\cdots(n_k+1)-1}$,
then it is affine linear in the general coordinate $\rho$.

Conversely, we know, for all $\alpha_{a_1},\beta_{a_2},\dots,\gamma_{a_k}\in\bC$,
$$\sum_{(i_1,\dots,i_k)} (\alpha_{i_1}\cdot\beta_{i_2}\cdots \gamma_{i_k}) q_{i_1,\dots,i_k} = 0$$
if and only if every $q_{i_1,\dots,i_k} = 0$.  
Hence, if there exists \mbox{$q_{j_1,\dots,j_k} \neq 0$}, then
$$\sum_{(i_1,\dots,i_k)}\left(\alpha_{i_1}\cdot \beta_{i_2} \cdots \gamma_{i_k}\right) q_{i_1,\dots,i_k} \neq 0$$ 
for general $\alpha_{a_1},\beta_{a_2},\dots,\gamma_{a_k}\in\bC$.
Therefore, if the trace of some coordinate in the
Segre product space $\bP^{(n_1+1)\cdots(n_k+1)-1}$
is not affine linear in $t$,
then the trace for $\rho$ will also not be affine
linear in $t$.
\end{proof}

In order to numerically test for affine linearity in $t$,
a classical approach is to evaluate
the trace at~$3$ distinct values of $t$ and decide
if they lie on a line.  Another approach
is described in \cite{TraceDerivs} which computes
derivatives with respect to $t$ of the trace.
Two savings in this numerical computation 
are to work intrinsically on the hyperplanes 
$H_i = 1$ to reduce the number of variables, and 
to use slices which preserve structure and/or simplify
the computation.  These savings are 
utilized in Section \ref{appTrace}.

\subsection{Decomposition}\label{sec:Decomposition}

Decomposition of a pure-dimensional variety
corresponds with partitioning the collection
of witness point sets into collections
corresponding to each irreducible component.
Similar to the classical trace test \cite{TraceTest},
one can use Theorem~\ref{thm:TraceTest} to
perform this partition by finding the smallest subsets for 
which the trace is affine linear.  
When the total number of points is small,
such as in Ex.~\ref{ex:Trace} which has only
$5$ points to consider, this is an effective approach.
However, this quickly becomes impractical
as the number of points increases.
Thus, we propose using two methods to 
reduce the number of possible partitions one needs
to consider.  Both are based on the fact that
smooth points of irreducible components are 
path connected.

The first approach is currently used in the
classical affine or projective setting, namely
to utilize random monodromy loops \cite{Monodromy}.
That is, one tracks the points along the variety 
as a linear slice is moved in a general loop in the 
corresponding Grassmannian.  Each start point
and corresponding endpoint defined by this loop
must lie on the same irreducible component.

In the multiprojective setting, 
there is a second approach that one may utilize,
namely to use one set of slices 
to perform membership testing (Section~\ref{sec:Membership}) 
on the points arising from a different set of slices.
As with monodromy loops, all points which are connected
by smooth paths must lie on the same irreducible component
thereby reducing the possible number of partitions to
consider.

\begin{example}\label{exRegenerationDecomp}
The multiregeneration computation in Ex.~\ref{exRegeneration}
yielded pure-dimensional witness sets
for dimensions $1$ and $2$.  We now illustrate
how to decompose into the four irreducible components.

We first start with the pure $2$-dimensional variety
which has degree $1\omega^{(2,0)}+1\omega^{(1,1)}$.
Since the trace of $m_{(2,0)}$, 
which is the unique point in $\cV\cap\cL^{(2,0)}$,
is affine linear, this shows that there are two irreducible
components, one of degree $1\omega^{(2,0)}$ (namely, $\cS_1$)
and one of degree $1\omega^{(1,1)}$ (namely, $\cS_2$).

We now turn to the pure $1$-dimensional variety
which has degree $1\omega^{(1,0)}+2\omega^{(0,1)}$.
For simplicity and 
concreteness, let $H_1 = x_2$ and $H_2 = y_2$
define the hyperplanes as infinity $\cH_1$ and $\cH_2$, respectively.  Let $L^{(1,0)} = x_0 + x_1 - 2x_2$
and $L^{(0,1)} = y_0 - 2y_1 - y_2$ define
the linear slices $\cL^{(1,0)}$ and $\cL^{(0,1)}$, respectively,
with $R_1 = L^{(1,0)}\cdot L^{(0,1)}$ define
the Segre linear slice $\cR_1$.  Thus, $\cV\cap\cR_1$ 
consists of $3$ isolated points
$$m_1 = ([1,1,1],[1,1,1]), ~~ m_2 = ([-1,-1,1],[-1,-1,1]), \hbox{~~and~~} m_3 = ([0,0,1],[1,0,1])$$
where $\{m_1\}$ and $\{m_2,m_3\}$ are the sets
of isolated points of $\cV\cap\cL^{(1,0)}$ and $\cV\cap\cL^{(0,1)}$, respectively.
We now employ the membership test (see Section~\ref{sec:Membership}) applied to $m_1$ 
using $\cL^{(0,1)}$.  For simplicity,
we take $L^{(0,1)}_{m_1} = (1+\sqrt{-1})(y_0 - 2y_1 + y_2)$
which defines $\cL^{(0,1)}_{m_1}$ and utilize
the homotopy $\bfHbar(\cV,\cL^{(0,1)}\to \cL^{(0,1)}_{m_1})$
with start points $m_2$ and $m_3$.  
The endpoints of the path starting at $m_2$ and $m_3$
are $m_1$ and $([0,0,1],[-1,0,1])$, respectively.
Hence, $m_1$ and $m_2$ lie on the same irreducible component.  
The trace test shows that there are indeed
two irreducible components, one from $\{m_1,m_2\}$ 
having degree $1\omega^{(1,0)}+1\omega^{(0,1)}$ (namely $\cC_1)$
and the other from $\{m_3\}$ having degree 
$1\omega^{(0,1)}$ (namely $\cC_2)$.
\end{example}

\section{Trace test examples}\label{appTrace}

We close with several examples using 
the multihomogeneous trace test 
from Section~\ref{sectionTrace}.

\subsection{Tensor decomposition}\label{sec:Tensor}

In \cite{CMO14}, the problem of computing
the number $k$ of tensor decompositions of a general tensor
of rank $8$ in $\bC^3\otimes\bC^6\otimes\bC^6$ is formulated
with \cite[Thm.~3.5]{CMO14} proving $k\geq6$.
Computation~4.2 of~\cite{Tensors} uses numerical algebraic geometry
together with the fact the number
of minimal decompositions of
a general tensor of $\Sym^3\bC^3\otimes\bC^2\otimes\bC^2$ 
is also $k$ to conclude that $k = 6$.  
We use the trace test from Theorem~\ref{thm:TraceTest}
to confirm this result.  To that end,
we first consider a natural formulation as an affine variety.
For $a,b\in\bC$, Let $\nu_3(1,a,b)\in\bC^{10}$ be the degree $3$ Veronese embedding, i.e.,
$$v_3(1,a,b) = (1,a,b,a^2,ab,b^2,a^3,a^2b,ab^2,b^3).$$
For $x\in\bC^5$, consider the tensor $T(X) = \nu_3(1,x_1,x_2)\otimes(x_3,x_4)\otimes(1,x_5)\in\bC^{40}$.  
For general
 $P,Q\in\bC^{40}$, we consider the set
$$\cC = \left\{\left(s,X^{(1)},\dots,X^{(8)}\right)~\left|~P\cdot s + Q = \sum_{i=1}^8 T\left(X^{(i)}\right)\right\}\right.\subset\bC\times\left(\bC^5\right)^8.$$
It follows from \cite[Thm.~3.42]{SS85} that $\cC$ is 
an irreducible variety of dimension $1$.
Using the natural embedding of $\bC\times(\bC^5)^8\hookrightarrow\bP^1\times\bP^{40}$,
we have the two natural hyperplanes at infinity and
can restrict each being set to $1$ resulting
in computations back on $\cC\subset\bC\times(\bC^5)^8$.
In order to maintain the 
invariance under the symmetric group on $8$ elements, namely $S_8$, for $r_1,r_2\in\bC$,
we consider the Segre linear space~$\cR_1$ 
defined by 
$$R_1 = (s - r_1)\left(x^{(1)}_5 + \cdots +x^{(8)}_5 - r_2\right).$$
After randomly selecting $P,Q\in(\bC^{5})^8$ and $r_1,r_2\in\bC$,
we used {\tt Bertini} \cite{Bertini} to compute $\cC\cap\cR_1$,
which yielded a total of 
$528\cdot 8! = \hbox{21,288,960}$
points with precisely $6\cdot 8!$ points 
satisfying $s - r_1=0$, i.e.,~$k = 6$.
To verify that we have computed every point
in $\cC\cap\cR_1$, we applied the trace test from
Theorem~\ref{thm:TraceTest} which showed
that the trace was indeed affine linear
confirming \cite[Computation~4.2]{Tensors}.

\subsection{Alt's problem}\label{sec:Alt}

In 1923, Alt \cite{Alt} formulated the problem of counting the number of coupler curves generated by four-bar linkages
that pass through nine general points in the plane.
The solution to this problem, namely $1442$, 
was found using homotopy continuation in \cite{NinePoint}.
We use the trace test from Theorem~\ref{thm:TraceTest}
to confirm this result and study two related systems.

Following the formulation in \cite{NinePoint}, only the displacements
between one selected point and the other eight points are of interest. 
These displacements are represented using isotropic coordinates
$(\delta_j,\overline{\delta_j})\in\bC^2$ for $j = 1,\dots,8$.  
The polynomial system under consideration consists of $12$ polynomials
in $12$ variables: 
$$a,b,m,n,x,y,\overline{a},\overline{b},\overline{m},\overline{n},\overline{x},\overline{y}.$$
The first four polynomials are:
$$f_1 = n - a \overline{x}, ~~~~ f_2 = \overline{n} - \overline{a} x, ~~~~
f_3 = m - b \overline{y}, ~~~~ f_4 = \overline{m} - \overline{b} y.$$
Next, for $j = 1,\dots,8$, we have $f_{4+j} = \gamma_j \overline{\gamma_j} + \gamma_j \gamma_j^0 + \overline{\gamma_j} \gamma_j^0$ where:
$$
\begin{array}{llllllllllll}
\gamma_j&:=& q_j^x r_j^y - q_j^y r_j^x, &
\overline{\gamma_j}&:=&r_j^x p_j^y - r_j^y p_j^x, &
\gamma_j^0&:=& p_j^x q_j^y - p_j^y q_j^x,\quad\text{and}\\
p_j^x &:=& \overline{n} - \overline{\delta_j} x, & 
q_j^x &:=& n - \delta_j \overline{x}, & 
r_j^x &:=& \delta_j (\overline{a} - \overline{x})
+ \overline{\delta_j} (a - x) - \delta_j \overline{\delta_j}, \\
p_j^y &:=& \overline{m} - \overline{\delta_j} y, & 
q_j^y &:=& m - \delta_j \overline{y}, & 
r_j^y &:=& \delta_j (\overline{b} - \overline{y})
+ \overline{\delta_j} (b - y) - \delta_j \overline{\delta_j}.
\end{array}
$$

\subsubsection{Original problem}
To study Alt's problem, we randomly selected $(\delta_j,\overline{\delta_j})\in\bC^2$
for $j = 1,\dots,7$ and $u_\ell,v_\ell\in\bC$ for $\ell = 1,2$.
For $\delta_8 = u_1 s + v_1$ and $\overline{\delta_8} = u_2 s + v_2$, 
we consider the irreducible variety 
$\cC\subset\bC\times\bC^{12}$ 
defined by $f_1 = \cdots = f_{12} = 0$
consisting of nondegenerate four-bar linkages whose
coupler curve passes through $(\delta_j,\overline{\delta_j})$ for $j = 1,\dots,8$.
As in Section~\ref{sec:Tensor}, 
we actually consider the natural embedding
of $\bC\times\bC^{12}\hookrightarrow\bP^1\times\bP^{12}$
and restrict each natural hyperplane at infinity 
to be $1$.  To further simplify the computation,
we actually model the computation 
related to computing the closure of
the image of $\cC$ under the map
$$\pi(s,a,b,m,n,x,y,\overline{a},\overline{b},\overline{m},\overline{n},\overline{x},\overline{y}) = (s,x).$$
That is, for random $r_1,r_2\in\bC$, we consider 
intersecting $\cC$ with the Segre linear space $\cR_1$
defined by 
$$R_1 = (s - r_1)(x - r_2).$$
We used {\tt Bertini} to compute $\cC\cap\cR_1$
resulting in 32,358 points with
precisely $8652 = 1442\cdot2\cdot3$ of them
satisfying $s - r_1=0$.  In particular, $8652$
corresponds to the number of distinct four-bar
mechanisms while $1442$ is the number of distinct
coupler curves.  The $6$-fold reduction
from mechanisms to coupler curves
is due to a two-fold symmetry and Roberts cognates.
Since the trace of the 32,358 points is indeed
affine linear, this confirms
the result of \cite{NinePoint}.

\subsubsection{Product decomposition bound}
A product decomposition bound is constructed in \cite{ProductDecomp} by replacing
$$f_{4+j} = \gamma_j \overline{\gamma_j} + \gamma_j \gamma_j^0 + \overline{\gamma_j} \gamma_j^0$$
for $j = 1,\dots,8$ with
$$g_{4+j} = (\alpha_j \gamma_j + \beta_j \gamma_j^0)\cdot(\mu_j \overline{\gamma_j} + \nu_j \gamma_j^0)$$
where $\alpha_j,\beta_j,\mu_j,\nu_j\in\bC$.
For generic choices, \cite{ProductDecomp}
showed that the resulting system has 
18,700 isolated solutions, which is 
a product decomposition 
bound on the number of isolated solutions
for Alt's problem.  We can repeat
a similar computation for Alt's original problem
with this product decomposition system
by letting $\cC$ denote the
the union of the irreducible components of dimension $1$
defined by randomly selecting all parameters 
except $\delta_8$.  We then intersected $\cC$
with the Segre linear slice $\cR_1$ defined~by
$$R_1 = (\delta_8 - r_1)(x-r_2)$$
for random $r_1,r_2\in\bC$.  
Using {\tt Bertini}, we find that 
$\cC\cap\cR_1$ consists of 37,177 points,
precisely 18,700 satisfy $\delta_8 - r_1 = 0$.
We confirm the result of~\cite{ProductDecomp}
since the trace of the 37,177 points is
affine linear.

\subsubsection{Polyhedral bound}
In our last example, we consider
the family of systems $\sF$ with the same monomial support 
as the polynomial system $f_1,\dots,f_{12}$
defining Alt's problem as above.  The number of solutions
to a generic member of $\sF$ is called the
polyhedral bound or BKK bound \cite{NinePtBKK}.
We consider a line in $\sF$ by
randomly fixing all coefficients except the coefficient
of $\overline{a}x$ in $f_2$, which we call $p_2$.  
Consider the set of isolated solutions 
over this line yields a variety of dimension $1$
which we intersect with $\cR_1$ defined by
$$R_1 = (p_2-r_1)(x-r_2)$$
for random $r_1,r_2\in\bC$.  
Using {\tt Bertini},
we obtain 132,091 points, which was verified to be complete
by the trace test, 
with exactly 79,135 satisfying $p_2-r_1 = 0$.  
Therefore, our computation shows that the BKK (polyhedral)
bound for this system is 79,135.  Since this contradicts
the bound of 83,977 reported in \cite{NinePtBKK}, 
we provide a proof based on exact computations
in {\tt polymake} \cite{polymake}
that 79,135 is indeed correct.

\begin{theorem}[BKK bound for Alt's problem]\label{thm:NinePtBKK}
The BKK (polyhedral) bound for $f_1,\dots,f_{12}$
as above for Alt's problem is equal to 79,135.
\end{theorem}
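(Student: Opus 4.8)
By Bernstein's theorem, the BKK (polyhedral) bound of a system with prescribed monomial supports equals the mixed volume $\mathrm{MV}(P_1,\dots,P_{12})$ of the Newton polytopes $P_i = \mathrm{Newt}(f_i)$, lattice polytopes in $\mathbb{R}^{12}$ whose coordinates we index by $a,b,m,n,x,y,\overline a,\overline b,\overline m,\overline n,\overline x,\overline y$. The plan is to reduce this to the exact volume of a single $8$-dimensional lattice polytope and compute that volume with exact rational arithmetic in {\tt polymake} \cite{polymake}; the value will be $79{,}135$, confirming the numerical value found by the trace test and correcting the $83{,}977$ of \cite{NinePtBKK}.

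First I would record the polytopes. The polynomials $f_1 = n - a\overline x$, $f_2 = \overline n - \overline a x$, $f_3 = m - b\overline y$, $f_4 = \overline m - \overline b y$ are binomials, so $P_1,\dots,P_4$ are segments with primitive edge directions $e_n - e_a - e_{\overline x}$, $e_{\overline n} - e_{\overline a} - e_x$, $e_m - e_b - e_{\overline y}$, and $e_{\overline m} - e_{\overline b} - e_y$, respectively; these four vectors are linearly independent since $e_n,e_{\overline n},e_m,e_{\overline m}$ each occur in exactly one of them. The remaining polynomials have a common monomial support, because replacing the generic parameters $(\delta_j,\overline{\delta_j})$ leaves unchanged which monomials appear in $\gamma_j\overline{\gamma_j} + \gamma_j\gamma_j^0 + \overline{\gamma_j}\gamma_j^0$ once the supports of $p_j^x,q_j^x,r_j^x,p_j^y,q_j^y,r_j^y$ are substituted; thus $P_5 = \dots = P_{12} =: Q$.

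Next I would apply the standard mixed-volume reduction against a binomial (the monomial elimination underlying Bernstein's theorem): if $\pi$ is the surjective lattice map that is the identity on all coordinates except $n$ and sends $e_n \mapsto e_a + e_{\overline x}$, then $\ker\pi$ is spanned by the edge direction of $P_1$, hence $\pi(P_1)$ is a point and $\mathrm{MV}(P_1,\dots,P_{12}) = \mathrm{MV}(\pi(P_2),\dots,\pi(P_{12}))$; concretely, $\pi$ implements $n \mapsto a\overline x$. Since the edge directions of $P_1,\dots,P_4$ are independent, I can iterate with $f_2,f_3,f_4$ to eliminate $\overline n, m, \overline m$, producing a surjective lattice map $\Pi\colon\mathbb{R}^{12}\to\mathbb{R}^{8}$ onto the coordinates indexed by $a,b,x,y,\overline a,\overline b,\overline x,\overline y$ with $\Pi(e_n)=e_a+e_{\overline x}$, $\Pi(e_{\overline n})=e_{\overline a}+e_x$, $\Pi(e_m)=e_b+e_{\overline y}$, $\Pi(e_{\overline m})=e_{\overline b}+e_y$, such that
$$\mathrm{MV}(P_1,\dots,P_{12}) = \mathrm{MV}\bigl(\underbrace{\Pi(Q),\dots,\Pi(Q)}_{8}\bigr) = 8!\cdot\mathrm{Vol}_8\bigl(\Pi(Q)\bigr),$$
the last equality being the specialization of mixed volume to a single repeated polytope. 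So the BKK bound is exactly the normalized volume of the explicit $8$-dimensional lattice polytope $\Pi(Q)$, whose vertices are the $\Pi$-images of the exponent vectors appearing in $f_{4+j}$. Finally I would feed this vertex list to {\tt polymake} and read off $8!\,\mathrm{Vol}_8(\Pi(Q)) = 79{,}135$.

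The main obstacle is purely organizational rather than conceptual: correctly enumerating the surviving monomial support of $f_{4+j}$ — all exponent vectors in the expansion of $\gamma_j\overline{\gamma_j} + \gamma_j\gamma_j^0 + \overline{\gamma_j}\gamma_j^0$, accounting for cancellation — and checking that $\Pi(Q)$ is genuinely $8$-dimensional so the reduction is non-degenerate. Once the vertex description of $\Pi(Q)$ is pinned down, the exact normalized volume is a routine {\tt polymake} computation; as a cross-check, one may instead compute $\mathrm{MV}(P_1,\dots,P_{12})$ of the twelve polytopes directly, which must return the same value.
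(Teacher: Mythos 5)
Your proposal takes essentially the same route as the paper: eliminate $n,\overline n,m,\overline m$ using the four binomial equations $f_1,\dots,f_4$, observe that the remaining eight polynomials then share a common $8$-dimensional Newton polytope, and evaluate the BKK bound as $8!$ times the exact volume of that polytope in {\tt polymake}. The difference is mostly one of emphasis and justification. You formalize the reduction step as the standard mixed-volume lemma for a segment factor (projecting along the edge direction of $P_i$, $i=1,\dots,4$, via the lattice surjection $\Pi$), and you state the resulting identity $\mathrm{MV}(P_1,\dots,P_{12})=\mathrm{MV}(\Pi(Q),\dots,\Pi(Q))=8!\,\mathrm{Vol}_8(\Pi(Q))$ explicitly; the paper instead attributes the reduction to \cite{NinePtBKK} and carries it out operationally by substituting $n\mapsto a\overline{x}$, $\overline n\mapsto\overline a x$, $m\mapsto b\overline{y}$, $\overline m\mapsto\overline b y$ in {\tt Maple} before handing the resulting support ($239$ monomials, $150$ vertices) to {\tt polymake}, which returns $\mathrm{Vol}_8=2261/1152$ and hence $8!\cdot 2261/1152=79{,}135$. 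Your framing is arguably the cleaner one for showing the bound is preserved: you take $\Pi$ of the Newton polytope of $f_{4+j}$ in $\mathbb{R}^{12}$ directly, whereas the paper takes the Newton polytope of the \emph{substituted} polynomial in $\mathbb{R}^{8}$; a priori the latter could be strictly smaller (two twelve-variable exponents colliding under $\Pi$ could have coefficients canceling for the specific unit substitution), and it is precisely such cancellation/overcounting issues that the paper blames for the erroneous $83{,}977$ of \cite{NinePtBKK}. In the end the two polytopes must have the same volume here since both values agree with the numerically observed generic root count, but your formulation sidesteps that worry. The one thing your sketch leaves undone that the paper actually supplies is the computation itself — the explicit {\tt Maple} support enumeration and {\tt polymake} volume — which is the content of the theorem; as a proof proposal this is reasonable to defer, but the result ultimately rests on that exact arithmetic.
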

\begin{proof}
Following \cite{NinePtBKK}, we can use 
$f_1,\dots,f_4$ to remove $n$, $\overline{n}$, $m$, and
$\overline{m}$ resulting in an unmixed system,
i.e., all polynomials have the same monomial support, 
consisting of $8$ polynomials of degree $7$ in $8$ variables
with the same BKK (polyhedral) bound as the original system.
Since $\gamma_j$, $\overline{\gamma_j}$,
and $\gamma_j^0$ are quartic polynomials, 
one naively would have expected the polynomials to have degree $8$, which is not the case due to exact cancellation 
in the coefficients.  Hence, to properly recover the 
monomial support, we used the following computation
in {\tt Maple} to find that the support of these $8$ polynomials 
is $239$ monomials.  

\begin{verbatim}
  #input
  n := a*xHat: nHat := aHat*x: m := b*yHat: mHat := bHat*y:
  px := nHat - x*dHat: py := mHat - y*dHat:
  qx := n - xHat*d: qy := m - yHat*d:
  rx := d*(aHat - xHat) + dHat*(a - x) - d*dHat:
  ry := d*(bHat - yHat) + dHat*(b - y) - d*dHat:
  g := qx*ry - qy*rx:
  gHat := rx*py - ry*px:
  gZero := px*qy - py*qx:
  f := g*gHat + g*gZero + gHat*gZero:
  nops([coeffs(expand(f),[a,b,x,y,aHat,bHat,xHat,yHat])]);
  239    #output
\end{verbatim}

We then used the software {\tt polymake}
\cite{polymake} to compute the vertices
of the polytope which is the 
convex hull of these $239$ monomials
resulting in $150$ vertices.  
We note that \cite{NinePtBKK} reported $259$ monomials
and $158$ vertices with the lower values
in our computation possibly resulting from 
using symbolic computations in {\tt Maple} to 
remove monomials whose coefficients are identically 
zero.  
To complete the proof, we used
{\tt polymake} to compute the volume of this
polytope, which was $2261/1152$.  Hence, the BKK (polyhedral)
bound is equal to 
$$8!\cdot 2261/1152 = \hbox{79,135}.$$
\end{proof}

\section*{Acknowledgment}

The authors want to thank Anton Leykin, Andrew Sommese, and Frank
Sottile for helpful conversations, some of which occurred
during the fall of 2014 at the Simons Institute for the Theory of Computing, leading to the results in this paper. 
Certain aspects of the trace test first presented in earlier
drafts of this paper have subsequently been reformulated in \cite{LS16}.

JDH was supported in part by NSF ACI 1460032 and Sloan Research 
Fellowship.  
JIR was supported in part by NSF DMS 1402545.

\bibliographystyle{amsplain}
\providecommand{\bysame}{\leavevmode\hbox to3em{\hrulefill}\thinspace}
\providecommand{\MR}{\relax\ifhmode\unskip\space\fi MR }
\providecommand{\MRhref}[2]{%
  \href{http://www.ams.org/mathscinet-getitem?mr=#1}{#2}
}
\providecommand{\href}[2]{#2}


\end{document}